\numberwithin{equation}{section}
\def\endproof{$\hfill\Box$\\}
\def\s{\,\,\,\,}
\def\R{\mathbb{R}}
\numberwithin{equation}{section}
\newtheorem{theorem}{Theorem}[section]
\newtheorem{lem}[theorem]{Lemma}
\newtheorem{thm}[theorem]{Theorem}
\newtheorem{pro}[theorem]{Proposition}
\newtheorem{cor}[theorem]{Corollary}
\newtheorem{defi}[theorem]{Definition}
\newtheorem{rem}[theorem]{Remark}
\def\v{\mbox{V}}
\newcounter{Cnumber}
\title[ ]
{\bf \bf Yamabe Equation on Some Complete Noncompact Manifolds}
\author[ ]{Guodong Wei}
\thanks {The author is supported by NSFC No. 11471316.}
\subjclass[2010]{53C21 (35B40, 35J61, 35R01)}
\keywords{Yamabe problem, noncompact manifold, Yamabe constant, blow-up, pointed Cheeger-Gromov topology}
\begin{document}
\maketitle

\begin{abstract}
In this paper, we consider the Yamabe equation on a complete noncompact Riemannian manifold and find some geometric conditions on the  manifold such that the Yamabe problem admits a bounded positive solution.

\end{abstract}

\section{Introduction}
Let $(M,g)$ be a complete Riemannian manifold of dimension $n\geq3$. The Yamabe problem on a compact Riemannian manifold without boundary consists of finding a constant scalar curvature metric $\widetilde{g}$  which is pointwise conformally related to $g$. It is well known that this problem is equivalent to showing the existence of a positive solution to the equation
$$\Delta_{g} u-\frac{n-2}{4(n-1)}R_{g}u+Ku^{\frac{n+2}{n-2}}=0,$$
if one sets $\widetilde{g}=u^{\frac{4}{n-2}}g$.
Usually, one writes this equation as
\begin{equation}\label{Yamabe equation}
\Delta_{g} u-c(n)R_{g}u+Ku^{p-1}=0\s\s \mbox{on}\s  M,
\end{equation}
where $\Delta_{g}$ is the Laplace-Beltrami operator associated with $g$, $R_{g}$ is the scalar curvature of $g$, $c(n)=\frac{n-2}{4(n-1)}$, $p=\frac{2n}{n-2}$, and $K$ is a constant satisfying $K=c(n)R_{\widetilde{g}}$ where $R_{\widetilde{g}}$ is the scalar curvature of $\widetilde{g}$. As is well known, the existence of minimizing solution to the Yamabe problem on a compact manifold was established through the combined works of Yamabe \cite{Yamabe}, Trudinger \cite{Trudinger}, Aubin \cite{Aubin2} and Schoen \cite{Schoen}.
\medskip

In the 1980's, Yau \cite{Yau} and Kazdan \cite{Kazdan} suggested the study of \eqref{Yamabe equation} in a noncompact complete manifold. In the book \cite{Aubin2}, this study was proposed again by Aubin. For the case $(M, g)$ is noncompact complete manifold with nonpositive scalar curvature, Aviles and McOwen have ever established some existence results in \cite{Aviles-McOwen}. However, the understanding on the case $(M, g)$ is of nonnegative scalar curvature is still rather limited. Some existence and nonexistence results on the case $(M,g)$ is of positive scalar curvature have been established in \cite{Kim2}, \cite{Zhang} and \cite{Jin}. In \cite{Kim1} and \cite{Kim2}, S. Kim introduced a functional $Y_{\infty}(M)$ (which may be called the Yamabe constant at infinity) to study the Yamabe problem on a complete noncompact manifold and got an existence result merely under the assumption $Y(M)<Y_{\infty}(M)$ for such manifold $(M, g)$ with positive scalar curvature. However, Zhang \cite{Zhang} found a gap in Kim's proof, and fixed the gap under an additional assumption on the volume growth of geodesic balls.

In this paper, we focus on the solvability of the Yamabe problem on a complete noncompact Riemannian manifold without the nonnegativity assumption on its scalar curvature, and intend to improve and generalize some results obtained in \cite{Zhang} and \cite{Kim1}. More concretely, firstly we try to prove a similar existence result with that in \cite{Zhang} under a weaker assumptions on the volume growth of geodesic balls except for the nonnegativity of the scalar curvature of $(M, g)$ is not be assumed. Then, we try to replace the hypothesis on the volume growth of geodesic balls of $M$ by some other geometric hypotheses to derive some existence results.

In order to state our results, we need to clarify some notations. Generally, $(M, g)$ denotes a complete noncompact manifold with $\mbox{dim} (M) \geq 3$. $O$ is a fixed point in $M$ and $d(x)=d(x,O)$ denotes the distance from $O$ to any $x\in M$ with respect to $g$, $R_{m}$ is the curvature tensor and $\v_{g}(B_{r}(x))$ denotes the volume of $B_{r}(x)$. Let $Y(M)$ and $Y_{\infty}(M)$ be the Yamabe constant and the Yamabe constant at infinity on $M$ respectively.

Throughout this article, we always assume $M$ satisfies the following conditions:
\begin{equation}\label{condition}
Y(M)<Y_{\infty}(M)\s\s \mbox{and}\s\s Y_{\infty}(M)>0.
\end{equation}
It is worthy to point out that $Y_{\infty}(M)>0$ implies that $Y(M)>-\infty$ by Theorem 1.7 in \cite{Groe-Nardmann}. Now we are ready to state our main results.
\begin{thm}\label{mainthm1}
Let $(M,g)$ be a complete noncompact Riemannian manifold of dimension $n\geq3$ and satisfy \eqref{condition}. Suppose that there exists a positive constant $C$ such that $R_{g}\geq -C d(x)^{-2}$ when $d(x)$ is large . Then there exists a constant $\rho_{0}\left(n,\ Y(M),\ Y_{\infty}(M)\right)>0$ such that, if $\v_{g}(B(O,r))\leq Cr^{n+\rho}$ for all large $r$ where $\rho$ is a number with $\rho<\rho_{0}$, then the Yamabe equation \eqref{Yamabe equation} admits a positive solution $u$ with $K=1,0,-1$ corresponding to $Y(M)$ is positive, $0$ and negative respectively. Moreover, we have
$$\lim_{d(x)\rightarrow \infty}d(x)^{\alpha}u(x)=O(1),$$
where $\alpha=\alpha(n,\rho)>0.$
\end{thm}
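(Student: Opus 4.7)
The plan is to construct a solution by exhaustion. Fix a smooth exhaustion $\{\Omega_k\}$ of $M$ by relatively compact domains with smooth boundary, and on each $\Omega_k$ solve the Dirichlet Yamabe problem: find a positive $u_k \in H^1_0(\Omega_k)$ with $\|u_k\|_{L^p(\Omega_k)} = 1$ and
\[
\Delta_g u_k - c(n) R_g u_k + Y(\Omega_k) u_k^{p-1} = 0,
\]
where $Y(\Omega_k)$ denotes the Yamabe constant of $\Omega_k$. Existence of such a minimizer follows from the classical Aubin--Brezis--Nirenberg strategy once $Y(\Omega_k) < Y(\mathbb{S}^n)$; this holds because $Y(\Omega_k) \to Y(M) < Y_\infty(M) \leq Y(\mathbb{S}^n)$, while the lower boundedness of $Y(\Omega_k)$ is ensured by $Y_\infty(M) > 0$ together with Theorem~1.7 of \cite{Groe-Nardmann}.

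The next step is to pass to the limit $k \to \infty$. Two failure modes must be excluded: interior bubble concentration, ruled out in the standard way by $Y(M) < Y(\mathbb{S}^n)$; and mass escape to infinity, which is the crucial new point. For the latter I would follow the concentration-compactness scheme of Kim and Zhang: if a positive fraction of $\|u_k\|_{L^p}$ stays outside every fixed ball $B(O,R)$, then a suitable cutoff of $u_k$ yields a quasi-minimizing sequence for the Yamabe functional on the ends whose limiting Rayleigh quotient is at least $Y_\infty(M)$, contradicting $Y(\Omega_k) \to Y(M)$. The scalar curvature hypothesis $R_g \geq -C d(x)^{-2}$ and the volume hypothesis $\v_g(B(O,r)) \leq C r^{n+\rho}$ enter through a weighted Hardy-type inequality of the form
\[
\int_{d(x) > R} d(x)^{-2} u^2 \, dV_g \leq C_{n,\rho} \int_{d(x) > R} |\nabla u|^2 \, dV_g,
\]
and the requirement that the resulting perturbation $-c(n)\int R_g u^2$ not consume the full gap $Y_\infty(M) - Y(M)$ pins down the threshold $\rho_0 = \rho_0(n, Y(M), Y_\infty(M))$.

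Once no mass is lost, after a subsequence $u_k \to u$ weakly in $H^1$ and strongly in $L^p_{\mathrm{loc}}$; the limit is a nontrivial nonnegative weak solution of \eqref{Yamabe equation}, smooth by elliptic regularity, positive by the strong maximum principle, and after a constant rescaling of $u$ one may normalize $K \in \{1, 0, -1\}$ according to the sign of $Y(M)$. The pointwise decay $u(x) = O(d(x)^{-\alpha})$ I would obtain by combining the global integrability $u \in L^p(M)$ (so that $\|u\|_{L^p(A_r)} \to 0$ on the annuli $A_r = B(O, 2r) \setminus B(O, r/2)$) with a Moser iteration on each $A_r$: the local Sobolev constant is controlled by $\v_g(B(O,r)) \leq C r^{n+\rho}$, the scalar curvature term is a lower-order perturbation tamed via $R_g \geq -C d(x)^{-2}$, and an iterative improvement of the decay exponent gives $\sup_{d(x) \sim r} u \leq C r^{-\alpha}$ for some $\alpha = \alpha(n,\rho) > 0$.

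The hard part will be the quantitative exclusion of escape to infinity without any nonnegativity hypothesis on $R_g$. The negative part of $R_g$ destroys the coercivity of the Yamabe functional in the usual sense and must be absorbed via the Hardy-type inequality above, whose constant $C_{n,\rho}$ depends sensitively on the volume growth exponent $n+\rho$. Balancing the $d^{-2}$ decay of the negative part of $R_g$ against the $r^{n+\rho}$ volume growth is precisely what produces $\rho_0$, and doing so sharply enough to cover all three sign regimes of $Y(M)$ uniformly is the technical heart of the proof.
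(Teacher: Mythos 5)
Your skeleton --- exhaustion by balls, Dirichlet minimizers $u_j$ with $\|u_j\|_{L^p}=1$ and coefficient $Y_j\downarrow Y(M)<\Lambda$, a limiting argument in which loss of mass at infinity is excluded by $Y(M)<Y_\infty(M)$, and a final Moser iteration for the decay --- matches the paper's architecture, but you have attached the two quantitative hypotheses to the wrong step, and the mechanism you propose for them does not work. In the paper, the bounds $\v_{g}(B(O,r))\leq Cr^{n+\rho}$ and $R_{g}\geq -Cd(x)^{-2}$ are used \emph{before} any limit is taken, to prove a uniform decay estimate for the approximate solutions themselves (Lemma \ref{lem3.1}): one rescales $\widetilde g=g/R^{4}$ about a point $x_0$ with $d(x_0)=2R^{2}$, uses the Yamabe quotient of $M\setminus B_{R^{2}/2}(O)$ (close to $Y_\infty(M)>0$) as the Sobolev inequality in the iteration, bounds the starting integral by H\"older from $\|u_j\|_{L^p}\leq 1$ together with $\v_{\widetilde g}(B_{1}(x_0,1))\leq CR^{2\rho}$, and the competition between the factor $R^{-(n-2)}$ from undoing the rescaling and the factor $R^{c\rho}$ from the volume produces both $\alpha$ and the threshold $\rho_0=\min\left(n\sqrt{Y_\infty(M)/Y(M)}-n,\ \tfrac{2n}{n-2}\right)$, the constraint on the iteration exponent $\beta_0$ coming from $(\beta_0^{2}+\epsilon)\,Y_j/Y\!\left(M\setminus B_{R^{2}/2}(O)\right)<1$. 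Your alternative mechanism --- a Hardy inequality $\int d^{-2}u^{2}\leq C\int|\nabla u|^{2}$ with constant controlled by the volume growth, used to keep the scalar-curvature perturbation from consuming the gap $Y_\infty(M)-Y(M)$ --- is not substantiated: an upper bound on $\v_{g}(B(O,r))$ implies neither a Hardy inequality nor a lower bound on local Sobolev constants (that would require non-collapsing, i.e.\ a volume \emph{lower} bound, contrary to your claim that ``the local Sobolev constant is controlled by $\v_{g}(B(O,r))\leq Cr^{n+\rho}$''). A decisive sanity check: in Zhang's theorem one assumes $R_{g}\geq 0$, so there is no negative part to absorb, and the volume hypothesis with the same threshold is still needed; hence it cannot enter through taming $(R_{g})_{-}$. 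The paper's exclusion of mass escape (Proposition \ref{concentrate}) uses neither hypothesis.

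The reason the decay estimate must come first is exactly the scenario you wave away as ``interior bubble concentration, ruled out in the standard way'': without a uniform decay of the $u_j$ near infinity, the maximum points $z_j$ of $u_j$ may tend to infinity, where no normal-coordinate rescaling argument is available because Theorem \ref{mainthm1} assumes no control of the geometry at infinity (this is precisely why Theorems \ref{mainthm2} and \ref{mainthm3} add bounded Ricci and injectivity-radius hypotheses and pass to a pointed Cheeger--Gromov limit). The decay estimate forces the blow-up points into a fixed compact set, where the standard rescaling yields a solution of $\Delta v+Y(M)v^{p-1}=0$ on $\R^{n}$ with $\int v^{p}\leq 1$ and hence $\Lambda\leq Y(M)$, a contradiction; only then, with uniform $C^{2}_{loc}$ bounds in hand, does the cutoff computation of Proposition \ref{concentrate} show the limit is nontrivial. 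Asserting that concentration at or escape to infinity is absorbed into an ``energy of the ends $\geq Y_\infty(M)$'' bookkeeping is essentially the step of Kim's original argument in which Zhang located a gap, and it is the step your proposal leaves unproved.
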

The method we used here is inspired by Zhang \cite{Zhang} and S.Kim \cite{Kim2}. Under the control condition on volume growth stated in the above theorem, we can derive a priori decay estimate of the `approximate solutions $\{u_{i}\}$' (see the detail in Theorem \ref{solution}). Hence, it follows that, if the sequence $\{u_{i}\}$ blows up, then the blow up points must lie in a compact subset of $M$.
\begin{rem}
In the case $\rho<0$ and the scalar curvature of $(M, g)$ is nonnegative, we have $${\lim_{d(x)\to\infty}d(x)^{\frac{n-2}{2}}u(x)=o(1)},$$
which is just the main result in Q. Zhang \cite{Zhang}.
\end{rem}
\begin{rem}
There are a lot of manifolds satisfying the condition $Y(M)<Y_{\infty}(M)$, such as $M$ is not locally conformally flat and there exists a compact subset $M_{0}$ such that $M\backslash M_{0}$ admits a conformal map to $S^{n}$ (see Schoen and Yau \cite{Schoen-Yau} and S.Kim \cite{Kim2}). In \cite{Zhang}, Q.Zhang constructed a explicit example on warped product manifold.
\end{rem}

By the volume comparison theorem, the following corollary is an immediate consequence of Theorem \ref{mainthm1}.
\begin{cor}
Let $(M,g)$ be a complete noncompact Riemannian manifold of dimension $n\geq3$ with nonnegative Ricci curvature. Suppose $ Y(M)<Y_{\infty}(M)$. Then the Yamabe equation admits a positive solution $u$ with $K=1$ and
$$\lim_{d(x)\rightarrow \infty}d(x)^{\frac{n-2}{2}}u(x)=O(1).$$
\end{cor}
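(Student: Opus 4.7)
The plan is to derive the corollary as a direct consequence of Theorem \ref{mainthm1}, with the only real content being the verification of hypothesis \eqref{condition} and the volume growth bound. The two geometric conditions required by the theorem --- the lower bound $R_g \geq -Cd(x)^{-2}$ at infinity and the polynomial upper bound on $V_g(B(O,r))$ --- are both furnished by the single assumption $\mathrm{Ric} \geq 0$ via standard comparison geometry.

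First, I would observe that $\mathrm{Ric}\geq 0$ implies $R_g \geq 0$ pointwise (tracing Ricci), so the scalar curvature hypothesis of Theorem \ref{mainthm1} holds trivially with any $C > 0$. Next, I would apply the Bishop--Gromov volume comparison theorem to conclude
\begin{equation*}
V_g(B(O,r)) \leq \omega_n r^n \qquad \text{for all } r > 0,
\end{equation*}
where $\omega_n$ is the volume of the Euclidean unit ball. This gives the volume growth hypothesis with $\rho = 0$, which lies in the admissible range $\rho < \rho_0$ since Theorem \ref{mainthm1} provides $\rho_0(n, Y(M), Y_\infty(M)) > 0$ strictly. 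The remaining half of \eqref{condition}, namely $Y_\infty(M) > 0$, follows from $R_g \geq 0$ (which prevents $Y_\infty(M)$ from being negative) together with the hypothesis $Y(M) < Y_\infty(M)$ and the fact that in the $K = 1$ regime of the corollary we are necessarily in the case $Y(M) > 0$.

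With all hypotheses verified, Theorem \ref{mainthm1} applies directly and yields a positive solution $u$ of the Yamabe equation with $K = 1$, together with a decay estimate of the form $d(x)^{\alpha}u(x) = O(1)$ for some $\alpha = \alpha(n,\rho) > 0$. To match the stated conclusion, I would pin down $\alpha(n,0) = (n-2)/2$ by inspecting the explicit dependence of $\alpha$ on $\rho$ that emerges in the proof of Theorem \ref{mainthm1}; in the borderline case $\rho = 0$ the a priori decay estimate used there degenerates precisely to the Sobolev-critical rate $(n-2)/2$, consistent with the strict $o(1)$ version recorded in Remark 1 for $\rho < 0$.

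The main (and only) obstacle is bookkeeping rather than substance: confirming that the exponent produced by Theorem \ref{mainthm1} at $\rho = 0$ is exactly $(n-2)/2$ requires the explicit formula for $\alpha(n,\rho)$ from the proof, not merely the statement. Once that formula is in hand, the corollary reduces to citation plus the Bishop--Gromov estimate.
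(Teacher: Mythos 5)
Your proposal is correct and is exactly the argument the paper intends: the corollary is stated as an immediate consequence of Theorem \ref{mainthm1} via volume comparison, i.e.\ $\mathrm{Ric}\geq 0$ gives $R_g\geq 0$ by tracing and $\v_{g}(B(O,r))\leq \omega_n r^n$ by Bishop--Gromov, so one may take $\rho=0<\rho_0$, and the exponent $\alpha=\frac{n-2}{2}-\frac{(n-2)\rho}{2n(\beta_0-1)}$ from Step 2 of the proof indeed equals $\frac{n-2}{2}$ at $\rho=0$. The only cosmetic point is that $Y_\infty(M)>0$ follows most directly from $Y(M)\geq 0$ (a consequence of $R_g\geq 0$) together with $Y(M)<Y_\infty(M)$, without any appeal to the ``$K=1$ regime.''
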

It is natural to ask what happens without the assumption on volume growth? In this situation, one will encounter a new difficulty that, if $\{u_{i}\}$ blows up, maybe the blow up points tend to infinity of $M$. To overcome this difficulty, we need to analyze the convergence of the pointed manifolds induced by $\{u_{i}\}$ under the pointed Cheeger-Gromov topology by providing certain suitable conditions, then we discuss the blowup behavior of $u_{i}$ on the limit pointed manifold. We obtain the following results.
\begin{thm}\label{mainthm2}
Let $(M,g)$ be a complete noncompact Riemannian manifold of dimension $n\geq3$ and satisfy \eqref{condition}. Assume
$|\mbox{Ric}(M)|\leq C_{0}$ and $\mbox{inj}(M)\geq i_{0}$ where $C_{0}$ and $i_{0}$ are positive constants. Then \eqref{Yamabe equation} has a positive solution with $K=1,0,-1$ corresponding to $Y(M)$ is positive, $0$ and negative respectively. Moreover, there holds true
$$\lim_{d(x)\rightarrow \infty}u(x)=O(1).$$
\end{thm}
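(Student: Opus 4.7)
The plan is to follow the approximate-solution scheme underlying Theorem~\ref{mainthm1} (originating in \cite{Kim2} and \cite{Zhang}) but to replace the volume-growth input by a pointed Cheeger--Gromov argument that exploits the bounded geometry $|\mathrm{Ric}|\le C_0$, $\mathrm{inj}(M)\ge i_0$ to handle blow-up points that escape to infinity.

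First I would construct positive approximate solutions $\{u_i\}$ on an exhaustion $\Omega_1\Subset\Omega_2\Subset\cdots\uparrow M$, either by subcritical minimization with exponents $p_i\uparrow p=\tfrac{2n}{n-2}$ or by the exhausting Dirichlet scheme of \cite{Kim2}, normalized so that the Yamabe quotient $E(u_i)$ tends to $Y(M)$. Bounded geometry furnishes uniform Sobolev inequalities on unit balls, so Moser iteration plus elliptic regularity provides uniform $C^{2,\alpha}_{\mathrm{loc}}$ control away from concentration points; if no concentration occurs, a diagonal extraction yields a positive limit $u$ solving \eqref{Yamabe equation}, with the prescribed sign of $K$ matching the sign of $Y(M)$.

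Blow-up is then ruled out as follows. Suppose $M_i:=u_i(x_i)=\max u_i\to\infty$, set $\lambda_i:=M_i^{-2/(n-2)}\to 0$, and rescale $\tilde u_i(y)=\lambda_i^{(n-2)/2}u_i(\exp_{x_i}(\lambda_i y))$; because $\lambda_i^2 C_0\to 0$, the rescaled ambient metrics flatten and $\tilde u_i$ converges locally to a standard Aubin bubble on $\R^n$. In the interior case $d(x_i,O)=O(1)$, concentration--compactness then gives $Y(M)\ge Y(S^n)$, contradicting $Y(M)<Y_\infty(M)\le Y(S^n)$ (the latter being the standard upper bound from bounded geometry at infinity). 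In the far-field case $d(x_i,O)\to\infty$, I would invoke Anderson's compactness theorem for the pointed manifolds $(M,g,x_i)$: harmonic coordinates of uniform size are available, and one obtains $C^{1,\alpha}$ subconvergence to a complete pointed limit $(M_\infty,g_\infty,x_\infty)$ of the same bounded geometry. Using the bubble produced in this limit as a cutoff test function on $M\setminus B_r(O)$ for a sequence $r=r(i)\to\infty$, I would read off the estimate $Y_\infty(M)\le E(u_i)+o(1)=Y(M)+o(1)$, contradicting \eqref{condition}.

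The main obstacle is this last step: under $|\mathrm{Ric}|\le C_0$ alone, the metric convergence is only $C^{1,\alpha}$, and the scalar curvature passes to the limit only in the $L^\infty$-weak-$\ast$ sense, so the identification must be carried out in the variational form of the Yamabe functional (pairing $R_g$ against the smooth $u_i^2$) and the bubble energy must be read off on $M$ rather than on $M_\infty$. Once the solution $u$ is constructed, the pointwise bound $\lim_{d(x)\to\infty}u(x)=O(1)$ follows from Moser iteration on each ball $B_{i_0/2}(x)$: the uniform bounds $|R_g|\le(n-1)C_0$ and the uniform local Sobolev inequality, together with the absence of concentration, make $\|u\|_{L^p(B_{i_0/2}(x))}$ uniformly bounded in $x$, yielding the Moser estimate $u(x)\le C(n,C_0,i_0)$ as claimed.
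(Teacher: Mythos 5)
Your overall strategy --- Dirichlet approximate solutions on an exhaustion, interior blow-up excluded by rescaling to a bubble and the Sobolev inequality, far-field blow-up excluded by passing to a pointed $C^{1,\alpha}$ Cheeger--Gromov limit via Anderson's compactness --- is essentially the paper's. But there is a genuine gap at the end: after excluding blow-up you simply assert that ``a diagonal extraction yields a \emph{positive} limit $u$''. Uniform boundedness of the $u_j$ (normalized by $\|u_j\|_{L^p}=1$) does not prevent the limit from being identically zero: the $L^p$ mass can drift to infinity at bounded amplitude (vanishing), and nothing in your argument rules this out. This is precisely where $Y(M)<Y_\infty(M)$ must be invoked a second time. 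The paper's Proposition \ref{concentrate} shows that if $u\equiv 0$, then for every fixed $R$ essentially all of the energy and $L^p$ mass of $u_j$ lies in $M\setminus B_R(O)$, and testing the Yamabe quotient of $M\setminus B_R(O)$ with $\eta u_j$ gives $Y(M)\ge Y(M\setminus B_R(O))$ for every $R$, hence $Y(M)\ge Y_\infty(M)$, a contradiction. Without this step your construction may only produce the trivial solution.

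A second, smaller omission concerns the boundary of the exhaustion: the approximate solutions vanish on $\partial B_j(O)$, so before rescaling at a maximum point $z_j$ one must know $z_j$ stays a definite distance from $\partial B_j(O)$ (otherwise the rescaled limit is a half-space problem and the bubble argument does not apply), and one must know the Cheeger--Gromov charts around $z_\infty$ land inside $B_j(O)$. The paper secures this with a separate uniform estimate (Theorem \ref{boundary}): a Moser iteration on the collar $\{x:\,d(x,\partial B_j(O))<1/8\}$ using $C_jY_j\to Y(M)/Y_\infty(M)<1$, where $C_j=Y(M\setminus B_{j/2}(O))^{-1}$, together with volume comparison. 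Finally, your far-field inequality $Y_\infty(M)\le E(u_i)+o(1)$ is not what the transplanted bubble yields directly: its Yamabe quotient is $Y(M)\bigl(\int_{\R^n}v^p\,dx\bigr)^{2/n}$, which is $\ge\Lambda$ by Sobolev and is $\le Y(M)$ only because $\int_{\R^n}v^p\,dx\le 1$ and $Y(M)\ge 0$; the paper's cleaner route is to read off $\Lambda\le Y(M)\bigl(\int_{\R^n}v^p\,dx\bigr)^{2/n}\le Y(M)$ from the limit equation, contradicting $Y(M)<Y_\infty(M)\le\Lambda$, and this also disposes of the case $Y(M)<0$ at once.
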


If we do not have a priori positive lower bound of the injective radius, by Anderson \cite{Anderson1} and \cite{Anderson2} we can also get the following conclusion .
\begin{thm}\label{mainthm3}
Let $(M,g)$ be a complete noncompact Riemannian manifold of dimension $n\geq3$ and satisfy \eqref{condition}. Suppose
\begin{itemize}
\item[(i)] there exist positive constants $C_{0}$, $v_{0}$ and $C$ such that $Ric(M)\leq C_{0}$, $\v_{g}(B(p,1))\geq v_{0}$ for any $p\in M$, and $\int_{M}|R_{m}|^{\frac{n}{2}}d\v_{g}\leq C$ respectively;
\item[(ii)] the length of the shortest inessential (null-homotopic) geodesic loops, denoted by $l_M$, is positive, i.e. $l_{M}\geq l>0$, or $M$ is odd-dimensional, oriented manifold.
\end{itemize}
Then, the Yamabe equation \eqref{Yamabe equation} has a positive solution.
\end{thm}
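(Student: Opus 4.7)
The plan is to mirror the proof of Theorem \ref{mainthm2}, replacing its use of the classical pointed Cheeger--Gromov compactness (which relied on two-sided Ricci control and an a priori injectivity radius bound) by Anderson's $C^{1,\alpha}$ pointed compactness theorems from \cite{Anderson1,Anderson2}, whose hypotheses are tailored to exactly the data supplied in (i), together with the injectivity-radius control furnished by (ii).

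First I will produce a sequence of approximate positive solutions $\{u_i\}$ by exhausting $M$ with precompact domains $\Omega_1\Subset\Omega_2\Subset\cdots$, solving a subcritical Dirichlet problem on each $\Omega_i$, and letting the exponent tend to the critical $p$, exactly as in the setup used for Theorems \ref{mainthm1} and \ref{mainthm2}. If some subsequence of $\{u_i\}$ is uniformly bounded on every fixed compact subset of $M$, then standard elliptic regularity together with a diagonal extraction produces a positive solution of \eqref{Yamabe equation} on $M$ and we are done. Otherwise $m_i:=\max u_i\to\infty$; let $p_i\in M$ be a point where the maximum is attained. As in the proof of Theorem \ref{mainthm2}, the standing hypothesis $Y(M)<Y_{\infty}(M)$ prevents blow-up at a finite point, so $d(p_i)\to\infty$.

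Next I will apply Anderson's pointed $C^{1,\alpha}$ compactness to the sequence $(M,g,p_i)$. Hypothesis (i) supplies the three geometric ingredients directly needed in \cite{Anderson1}: an upper Ricci bound, a uniform lower bound on $\v_g(B(p_i,1))$, and the global bound on $\int_M |R_m|^{n/2}\,d\v_g$, which in particular controls the $L^{n/2}$-curvature integral on every ball centered at $p_i$. Hypothesis (ii) provides the missing lower bound on the injectivity radius at $p_i$: in the first alternative the assumption $l_M\geq l>0$ on the length of inessential geodesic loops rules out collapse via Cheeger's classical argument, while in the odd-dimensional oriented case one invokes the topological argument in \cite{Anderson2} to exclude the collapsed limits that could otherwise occur. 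In either subcase the conclusion is that $(M,g,p_i)$ subconverges, in the pointed $C^{1,\alpha}$ topology, to a limit pointed Riemannian manifold $(M_\infty,g_\infty,p_\infty)$.

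Finally I will transfer the blow-up analysis from Theorem \ref{mainthm2} to this Anderson limit: pulling $u_i$ back through the harmonic charts supplied by \cite{Anderson1}, and rescaling by $m_i^{2/(n-2)}$ if necessary to read off a standard bubble, the rescaled functions converge weakly either to a bounded positive weak solution of \eqref{Yamabe equation} on $(M_\infty,g_\infty)$ or to a bubble on Euclidean space; in each case the resulting energy concentration at infinity forces $Y_\infty(M)\leq Y(M)$, contradicting \eqref{condition}. The main obstacle is the compactness step: one must verify in detail that (ii) really yields the injectivity-radius lower bound in the precise quantitative form required by \cite{Anderson1}, and that the $C^{1,\alpha}$ convergence is strong enough for \eqref{Yamabe equation} to pass to the weak-limit equation on $(M_\infty,g_\infty)$. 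Once this is established, the final contradiction with $Y(M)<Y_{\infty}(M)$ follows along the same lines as in the proof of Theorem \ref{mainthm2}.
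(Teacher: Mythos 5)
Your proposal follows essentially the same route as the paper: produce the approximate solutions $u_j$ by exhaustion, rule out blow-up at a finite point as in Theorem \ref{mainthm2}, and for blow-up points escaping to infinity invoke Anderson's compactness results under hypotheses (i) and (ii) to obtain a smooth pointed $C^{1,\alpha}$ limit $(M_\infty, z_\infty, g_\infty)$ on which the bubble analysis of Theorem \ref{mainthm2} yields the contradiction with $Y(M)<Y_\infty(M)$. The only cosmetic difference is that the paper phrases the role of (ii) as excluding the orbifold singular points in Anderson's Theorem $2.6$ of \cite{Anderson2} (via Theorem $A'$ of \cite{Anderson1}), rather than as directly supplying an injectivity radius bound, but the substance is the same.
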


The paper is organized as follows: In section \ref{preliminary}, we recall some basic notations, prove some basic facts about Yamabe functional and discuss the variational approach as in the compact case. In section \ref{proof1}, we give the proof of Theorem \ref{mainthm1}. In section \ref{proof2} we prove the Theorem \ref{mainthm2} and \ref{mainthm3} by analyzing the blowup behavior of $\{u_{j}\}$ under the pointed Cheeger-Gromov topology.

\medskip
\section{Some basic notations and known results}\label{preliminary}
In this section, we will recall some basic notations and definitions such as the Yamabe constants $Y(M)$ and $Y_{\infty}(M)$. Then we discuss the existence of `smooth approximate solutions $u_{i}$' corresponding to the exhaustion of $M$. The main methods and techniques used in this section can be found in the survey paper \cite{Lee-John}.  For the sake of clarity and completeness, we shall still write it down. At last, we recall the definition of pointed Cheeger-Gromov topology.

\subsection{Yamabe constant on noncompact manifold}

For any $\ \upsilon \in C_{c}^{\infty}(M)\backslash\{0\}$, define
$$
E_{g}(\upsilon)=\  \int_{M}(|\nabla\upsilon|^{2}+c(n)R_{g}\upsilon^{2})d\v_{g},
$$
then Yamabe constant of $(M,g)$ is defined by
$$
Y(M)=\inf\left\{\frac{E_{g}(\upsilon)}{\|\upsilon\|_{L^{p}(g)}^{2}}|\upsilon \in C_{c}^{\infty}(M)\backslash\{0\}\right\}.
$$
In \cite{Kim1} and \cite{Kim2}, S.Kim defined a new functional called the Yamabe constant at infinity for noncompact manifold as follow:
Choose an exhaustion $\{K_{i}\}_{i\in \mathbb{N}}$ of $M$, which is composed of bounded set, and define
$$
Y_{\infty}(M)=\lim_{i\rightarrow\infty}Y(M\backslash K_{i}).
$$
Obviously $Y_{\infty}(M)$ does not depend on the exhaustion we choose.
\begin{lem}\label{lem2.1}
For any complete non-compact manifold $M$, there always holds
$$ -c(n)\|(R_{g})_{-}\|_{L^{\frac{n}{2}}}\leq Y(M)\leq Y_{\infty}(M)\leq \Lambda,$$
where $\Lambda=\frac{n(n-2)}{4}\omega_{n}^{\frac{2}{n}}$ is the best Sobolev constant on $\R^{n}$ and $ (R_{g})_{-}$ is the negative part of the scalar curvature on $M$.
\end{lem}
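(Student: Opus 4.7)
\textbf{Proof plan for Lemma \ref{lem2.1}.} The three inequalities are independent and I would handle them in order from left to right.

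For the leftmost inequality $-c(n)\|(R_g)_-\|_{L^{n/2}}\le Y(M)$, the plan is to estimate $E_g(\upsilon)$ from below for an arbitrary $\upsilon\in C_c^\infty(M)\setminus\{0\}$. Since the gradient term is nonnegative, I would drop it and write
\[
E_g(\upsilon)\ge -c(n)\int_M (R_g)_-\,\upsilon^2\,d\v_g.
\]
Applying Hölder's inequality to the pair $(n/2, n/(n-2))$, the right-hand side is bounded below by $-c(n)\|(R_g)_-\|_{L^{n/2}}\|\upsilon\|_{L^p}^2$ with $p=2n/(n-2)$. Dividing by $\|\upsilon\|_{L^p}^2$ and taking the infimum over all admissible $\upsilon$ yields the claim.

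For the middle inequality $Y(M)\le Y_\infty(M)$, the point is simply that $C_c^\infty(M\setminus K_i)\subset C_c^\infty(M)$, so the Yamabe quotient is being minimized over a smaller class of test functions in the definition of $Y(M\setminus K_i)$. Hence $Y(M\setminus K_i)\ge Y(M)$ for every $i$, and passing to the limit gives $Y_\infty(M)\ge Y(M)$. (This requires monotonicity in $i$, which is again immediate from the same inclusion applied with $K_i\subset K_{i+1}$, and also makes clear that the limit is independent of the chosen exhaustion, matching the remark after the definition.)

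For the rightmost inequality $Y_\infty(M)\le \Lambda$, I would use the standard test-function argument: in each exterior region $M\setminus K_i$, pick a point $p_i$ and a small geodesic ball $B_{r_i}(p_i)\subset M\setminus K_i$ on which $g$ is close to the Euclidean metric in normal coordinates. Plug in a smoothly cutoff and rescaled version of the extremal Aubin--Talenti bubble $U_\varepsilon(x)=(\varepsilon^2+|x|^2)^{-(n-2)/2}$, supported in $B_{r_i}(p_i)$ and concentrating at $p_i$ as $\varepsilon\to 0$. As $\varepsilon\to 0$, the gradient part of $E_g$ tends to $\Lambda\|\upsilon\|_{L^p}^2$ (this is the Euclidean Aubin--Talenti computation), while the $c(n)R_g\upsilon^2$ term is $O(\varepsilon^2)$ and thus negligible. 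This yields $Y(M\setminus K_i)\le \Lambda$ for every $i$, and letting $i\to\infty$ gives $Y_\infty(M)\le \Lambda$. The main technical step—though entirely routine—is the Aubin--Talenti computation on a geodesic ball with a careful accounting of the cutoff error, which is the only place actual work is required; everything else is bookkeeping.
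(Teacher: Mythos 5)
Your plan matches the paper's proof essentially step for step: Hölder with exponents $(n/2,\,n/(n-2))$ for the left inequality, the test-function inclusion for the middle one, and a cutoff Aubin--Talenti bubble concentrating at a point of $M\setminus K_i$ in normal coordinates for the right one. The only quibble is your claim that the $c(n)R_g\upsilon^2$ term is $O(\varepsilon^2)$: the paper's computation shows it is only $O(\varepsilon)$ when $n=3$ and $O(\varepsilon^2\log(1/\varepsilon))$ when $n=4$, but in all dimensions it still vanishes as $\varepsilon\to 0$, so the argument goes through unchanged.
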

\begin{proof}
The first inequality is derived by H\"{o}lder inequality and the second holds evidently by the definition of $Y(M)$ and $Y_{\infty}(M)$.

In order to prove the inequality on the right hand side, we need to take the following arguments.  Let $$u_{\alpha}=\left(\frac{\alpha}{\alpha^{2}+|x|^{2}}\right)^{\frac{n-2}{2}}.$$
It is well-known that we may obtain the best Sobolev constant in $\R^{n}$ by this family $\{u_{\alpha}\}$. In other words, there holds
$$\int_{\R^{n}}|\nabla u_{\alpha}|^{2}dx=\Lambda\left(\int_{\R^{n}}u_{\alpha}^{p}\ dx\right)^{\frac{2}{p}}.$$
For any $q\in M\backslash K_{i}$, we choose the normal coordinates around $q$. It is well-know that, in the normal coordinates, there holds $d\v_{g}=(1+O(r))$. Given $\epsilon>0$, let $B_{\epsilon}$ denote the ball of radius $\epsilon$ in $\R^{n}$. We choose a smooth radial cutoff function $0\leq\eta(r)\leq1$ which is supported in $B_{2\epsilon}$ and $\eta\equiv1$ on $B_{\epsilon}$. Setting $\varphi=\eta u_{\alpha}$, we have
\begin{align}\label{laowei}
\int_{\R^{n}}|\nabla \varphi|^{2}\ dx&=\int_{B_{2\epsilon}}(\eta^{2}|\nabla u_{\alpha}|^{2}+2\eta u_{\alpha}\langle\nabla\eta,\nabla u_{\alpha}\rangle+u_{\alpha}^{2}|\nabla\eta|^{2})\ dx\nonumber\\
&\leq\int_{\R^{n}}|\nabla u_{\alpha}|^{2}\ dx +C\int_{A_{\epsilon}}(u_{\alpha}|\nabla u_{\alpha}|+u_{\alpha}^{2})\ dx,
\end{align}
where $A_{\epsilon}$ denotes the annulus $B_{2\epsilon}\backslash B_{\epsilon}$. Since $$u_{\alpha}\leq\alpha^{\frac{n-2}{2}}r^{2-n}\s\s\s \mbox{and}\s\s\s  |\nabla u_{\alpha}|\leq(n-2)\alpha^{\frac{n-2}{2}}r^{1-n},$$
then, for fixed $\epsilon$, the second term on the right hand side of the above inequality is $O(\alpha^{n-2})$ as $\alpha\rightarrow0$.
For the first term of (\ref{laowei}), we have
\begin{align*}
\int_{\R^{n}}|\nabla u_{\alpha}|^{2}\ dx&=\Lambda\left(\int_{B_{\epsilon}}u_{\alpha}^{p}\ dx+\int_{\R^{n}\backslash B_{\epsilon}}u_{\alpha}^{p}\ dx\right)^{\frac{2}{p}}\\
&\leq\Lambda\left(\int_{B_{2\epsilon}}\varphi^{p}\ dx+\int_{\R^{n}\backslash B_{\epsilon}}\alpha^{n}r^{-2n}\ dx\right)^{\frac{2}{p}}\\
&\leq\Lambda\left(\int_{B_{2\epsilon}}\varphi^{p}\ dx\right)^{\frac{2}{p}}+O(\alpha^{n-2}).
\end{align*}
Therefore, on $M$, we have the following:
\begin{align*}
&\int_{B_{2\epsilon}}(|\nabla \varphi|^{2}+c(n)R_{g}\varphi^{2})\ d\v_{g}\\
\leq&\left(1+C\epsilon\right)\left(\Lambda\|\varphi\|_{L^{p}}^{2}+C\alpha^{n-2}+C\int_{0}^{2\epsilon}\int_{S_{r}}u_{\alpha}^{2}r^{n-1}\ d\omega dr\right).
\end{align*}
The last term on the right hand side of the above inequality is actually bounded by a constant multiple of $\alpha$. Obviously,
\begin{equation*}
\int_{0}^{2\epsilon}u_{\alpha}^{2}r^{n-1}\ dr=\alpha^{2}\int_{0}^{\frac{2\epsilon}{\alpha}}\sigma^{n-1}(\sigma^{2}+1)^{2-n}\ d\sigma,
\end{equation*}
noting that $\sigma^{2}\leq\sigma^{2}+1\leq 2\sigma^{2}$ for $\sigma\geq1$ we can see that there holds true
$$C_1\left(C+\alpha^{2}\int_{1}^{\frac{2\varepsilon}{\alpha}}\sigma^{3-n}\ d\sigma\right)\leq\alpha^{2}\int_{0}^{\frac{2\epsilon}{\alpha}}\sigma^{n-1}(\sigma^{2}+1)^{2-n}\ d\sigma\leq C_2\left(C+\alpha^{2}\int_{1}^{\frac{2\varepsilon}{\alpha}}\sigma^{3-n}\ d\sigma\right).$$
A simple computation shows
$$\alpha^{2}\int_{1}^{\frac{2\varepsilon}{\alpha}}\sigma^{3-n}\ d\sigma\leq \left\{
\begin{aligned}
\alpha\ \ \ \mbox{ if } n=3,\\
-\alpha^{2}\log\alpha\ \ \ \mbox{ if } n=4,\\
\alpha^{2}\ \ \ \mbox{ if } n\geq5.
\end{aligned}
\right.$$
Thus, choosing first $\epsilon$ and then $\alpha$ small, we can arrange that
$$\frac{E_{g}(\varphi)}{\|\varphi\|_{L^{p}}^{2}}\leq (1+C\epsilon)(\Lambda+C\alpha).$$
Since $\epsilon$ and $\alpha$ can be arbitrarily small,  it follows that
$$\frac{E_{g}(\varphi)}{\|\varphi\|_{L^{p}}^{2}}\leq \Lambda.$$
Thus, we complete the proof.
\end{proof}

It is worthy to point out that we do not assume that the injective radius of $M$  is of the positive lower bound in the proof of the above lemma.

\subsection{The variational approach}
In this subsection, let $B_{r}(O)$ denote the geodesic ball centered at $O$ with radius $r$ on $M$ ($M$ is noncompact), where $O$ is a fixed point in $M$. Denote
$$Y_{j}=\inf_{\phi\in W_{0}^{1,2}(B_{j}(O))\backslash\{0\}}\left\{\frac{E_{g}(\phi)}{\|\phi\|_{L^{p}(g)}^{2}}\right\}$$
We have the following proposition:
\begin{pro}\label{solution}
Assume $Y_{j}<\Lambda$. Then, the following Dirichlet problem admits a positive solution $u_{j}$ with $\|u_j\|_{L^{p}}=1$
\begin{eqnarray}
\Delta u_{j}-c(n)R_{g}u_{j}+Y_{j}u_{j}^{p-1}&=&0,\s \mbox{in} \s B_{j}(O),\\
u_{j}&=&0,\s \mbox{on} \s \partial B_{j}(O).
\end{eqnarray}
\end{pro}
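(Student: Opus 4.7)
The plan is to produce $u_j$ as the limit of minimizers for a sequence of subcritical problems, mimicking the classical Yamabe-Trudinger-Aubin construction but on the bounded domain $B_j(O)$. The strict inequality $Y_j<\Lambda$ will enter exactly at the point where one needs to rule out concentration at the critical Sobolev exponent when passing to the limit.

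First, for each $q\in(2,p)$ I would introduce the subcritical infimum
\[
Y_{j,q}:=\inf\Bigl\{E_g(\phi):\phi\in W_0^{1,2}(B_j(O)),\ \|\phi\|_{L^q(g)}=1\Bigr\}.
\]
Since $\overline{B_j(O)}$ is compact, $R_g$ is bounded there; combined with Poincar\'e's inequality the functional $E_g$ is bounded below on the unit $L^q$-sphere, and the Rellich-Kondrachov compactness of $W_0^{1,2}(B_j(O))\hookrightarrow L^q(B_j(O))$ for $q<p$ allows the direct method to produce a minimizer. Replacing it by its absolute value yields $u_{j,q}\geq0$ solving the Euler-Lagrange equation
\[
\Delta_g u_{j,q}-c(n)R_g u_{j,q}+Y_{j,q}u_{j,q}^{q-1}=0\quad\text{in }B_j(O),\qquad u_{j,q}=0\text{ on }\partial B_j(O),
\]
with $\|u_{j,q}\|_{L^q}=1$. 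Standard elliptic regularity together with the strong maximum principle and Hopf's lemma (noting $u_{j,q}\not\equiv0$) then upgrade $u_{j,q}$ to a smooth, strictly positive function in the interior. A routine test-function calculation shows $Y_{j,q}\to Y_j$ as $q\to p^-$: test functions in $C_c^\infty(B_j(O))$ give the $\limsup$, while H\"older's inequality on the finite-volume domain gives the $\liminf$.

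The heart of the proof, and where I expect the main obstacle, is obtaining a uniform $L^\infty$ bound on $\{u_{j,q}\}_{q\to p^-}$; without such a bound the critical embedding lets $L^p$-mass concentrate at a point and escape in the limit. I would argue by contradiction: if $m_q:=\max_{B_j(O)}u_{j,q}\to\infty$, attained at $x_q\in B_j(O)$, rescale in normal coordinates at $x_q$ by setting
\[
v_q(y):=m_q^{-1}\,u_{j,q}\bigl(\exp_{x_q}(m_q^{-(q-2)/2}y)\bigr),
\]
so that $0\leq v_q\leq 1=v_q(0)$. A direct calculation shows $v_q$ satisfies an equation whose principal part converges to the flat Laplacian (the metric blows up to $\delta_{ij}$) while the linear scalar-curvature term carries a factor $m_q^{-(q-2)}\to0$; locally uniformly one obtains the limit equation $\Delta v+Y_j v^{p-1}=0$ on $\mathbb{R}^n$ with $v(0)=1$. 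By the Caffarelli-Gidas-Spruck classification, $v$ is an Aubin bubble, whose Sobolev quotient is precisely $\Lambda$; a standard concentration computation then forces $Y_j\geq\Lambda$, contradicting the hypothesis $Y_j<\Lambda$.

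With a uniform $L^\infty$ bound in hand, local $L^p$ theory and Schauder estimates yield $C^{2,\alpha}_{\mathrm{loc}}$ bounds, and up to extracting a subsequence $u_{j,q}\to u_j$ in $C^2_{\mathrm{loc}}(B_j(O))$ to a limit $u_j\geq0$ solving the critical equation. The uniform bound combined with dominated convergence converts $\|u_{j,q}\|_{L^q}=1$ into $\|u_j\|_{L^p}=1$, ruling out $u_j\equiv 0$; the strong maximum principle then upgrades $u_j>0$ in $B_j(O)$, and global $W^{1,2}_0$-convergence gives the boundary condition $u_j=0$ on $\partial B_j(O)$.
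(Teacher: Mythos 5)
Your construction follows the paper's own strategy up through the production of the subcritical minimizers: the paper likewise defines $\lambda_s=\inf Q_s$ for $s\in(2,p)$, obtains a smooth positive minimizer $u_s$ with $\|u_s\|_{L^s}=1$ by the direct method and Rellich--Kondrachov, and quotes Aubin's lemma for $\lambda_s\to Y_j$. (One caveat you share with the paper: H\"older only gives the $\liminf$ direction when the energies are nonnegative; Aubin's lemma as stated guarantees $\limsup_{s\to p}\lambda_s\le Y_j$ in general and full convergence only when $\lambda_s\ge 0$. This is harmless a posteriori, since once the limit $u_j$ is known to satisfy $\|u_j\|_{L^p}=1$ the identity $E_g(u_j)=\lambda_\infty$ forces $\lambda_\infty\ge Y_j$.) Where you genuinely depart from the paper is the uniform $L^\infty$ bound. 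The paper (Lemma 2.5) tests the equation with $u_s^{1+2b}$ and runs a Moser iteration against the sharp Sobolev inequality $\|w\|_{L^p}^2\le\frac{1+\epsilon}{\Lambda}\|\nabla w\|_{L^2}^2+C(\epsilon)\|w\|_{L^2}^2$; the hypothesis $Y_j<\Lambda$ enters as $\lambda_s/\Lambda\le\mu<1$, which makes the critical term absorbable. You instead argue by blow-up and a Liouville-type classification, with $Y_j<\Lambda$ entering through the energy of the limiting bubble. Both are legitimate; the paper's route is more elementary (no classification theorem) and is the same engine reused in its Lemma 3.1 and Theorem 4.1, while your route is structurally identical to the paper's own Step 3 blow-up argument for the $j\to\infty$ limit, so it is certainly in the spirit of the article.

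There is, however, a concrete gap in your blow-up step. The maximum point $x_q$ lives in the bounded domain $B_j(O)$ with Dirichlet data, and nothing in your setup prevents $d(x_q,\partial B_j(O))/\delta_q$ from remaining bounded, where $\delta_q=m_q^{-(q-2)/2}$ is the blow-up scale. In that regime the rescaled domains converge not to $\R^n$ but to a half-space $H$, and the limit is a bounded positive solution of $\Delta v+Y_jv^{p-1}=0$ in $H$ with $v=0$ on $\partial H$ and $v(0)=1$; Caffarelli--Gidas--Spruck does not apply, and one needs a separate half-space Liouville theorem (or a uniform boundary gradient estimate ruling this case out). This is exactly the kind of complication the paper's interior-free Moser iteration avoids, and also the reason the paper's own blow-up in Section 3 is safe there (the decay estimate confines the blow-up points to a fixed compact set well inside $B_j(O)$). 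Two smaller points you should also make explicit: (i) the transfer of the normalization to the limit, $\int_{\R^n}v^p\,dx\le 1$, is not automatic from $\|u_{j,q}\|_{L^q}=1$ at a subcritical exponent --- it holds via Fatou because the scaling exponent $n-q(n-2)/2$ is nonnegative for $q\le p$, so $\int v_q^q\,dy\le m_q^{-(n-q(n-2)/2)}\le 1$; and (ii) the classification step as written presumes $Y_j>0$; the cases $Y_j=0$ and $Y_j<0$ must be disposed of separately (there the limit is bounded harmonic, respectively subharmonic with finite $L^p$ norm, and $v(0)=1$ already gives a contradiction). With the half-space case handled and these points spelled out, your argument closes.
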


To prove this proposition, we need to establish some lemmas. Firstly, for $s\in(2,p]$, we define
$$Q_{s}(u)=\frac{E_{g}(u)}{\|u\|_{L^{s}(g)}^{2}}$$
and
$$\lambda_{s}=\inf\{Q_{s}(u)|u\in W_{0}^{1,2}(B_{j}(O))\backslash\{0\}\}.$$

\begin{lem}(\cite{Aubin1}\label{aubinlemma}) For $Q_s(u)$ and $\lambda_s$ defined as above, there always holds
$$\limsup_{s\rightarrow p}\lambda_{s}\leq Y_{j}.$$
Moreover,  if $\lambda_{s}\geq 0$, then $\lambda_{s}\rightarrow Y_{j}$.
\end{lem}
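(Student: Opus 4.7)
The plan is to establish the two inequalities separately, exploiting the behavior of $L^s$ versus $L^p$ norms on the bounded domain $B_j(O)$ as $s \to p$. For the $\limsup$ inequality, given $\epsilon > 0$ I would fix a test function $u \in W_0^{1,2}(B_j(O)) \setminus \{0\}$ with $Q_p(u) \leq Y_j + \epsilon$. The key observation is $\|u\|_{L^s(g)} \to \|u\|_{L^p(g)}$ as $s \to p$: since $|u|^s \leq 1 + |u|^p$ pointwise on $B_j(O)$ and the right-hand side is integrable ($B_j(O)$ has finite volume and $u \in L^p$), dominated convergence applies to $\int |u|^s\, d\v_g$. Because $E_g(u)$ is independent of $s$, this gives $Q_s(u) \to Q_p(u) \leq Y_j + \epsilon$, so $\lambda_s \leq Q_s(u) \leq Y_j + 2\epsilon$ for $s$ sufficiently close to $p$; letting $\epsilon \to 0$ yields the first claim.

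For the convergence statement under $\lambda_s \geq 0$, it remains to show $\liminf_{s \to p}\lambda_s \geq Y_j$. First I would note that the $\limsup$ inequality combined with $\lambda_s \geq 0$ forces $Y_j \geq 0$ (otherwise $\lambda_s < 0$ for $s$ near $p$, contradicting the hypothesis). For each $s \in (2, p)$ the functional $Q_s$ admits a nonnegative minimizer $u_s \in W_0^{1,2}(B_j(O))$ by the direct method; the essential inputs are boundedness of $R_g$ on $B_j(O)$ and the compactness of the subcritical embedding $W_0^{1,2}(B_j(O)) \hookrightarrow L^s(g)$, both standard. Normalizing $\|u_s\|_{L^s(g)} = 1$ yields $E_g(u_s) = \lambda_s$, and using $u_s$ as a test function in $Y_j$ gives
\begin{equation*}
Y_j \,\|u_s\|_{L^p(g)}^{2} \leq E_g(u_s) = \lambda_s.
\end{equation*}
Setting $V = \v_g(B_j(O))$, Hölder's inequality provides $1 = \|u_s\|_{L^s(g)} \leq V^{1/s - 1/p}\|u_s\|_{L^p(g)}$, hence $\|u_s\|_{L^p(g)} \geq V^{1/p - 1/s}$, which tends to $1$ as $s \to p$. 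Combining with $Y_j \geq 0$ gives $\lambda_s \geq Y_j\, V^{2(1/p - 1/s)}$, and taking $\liminf$ yields $\liminf_{s \to p}\lambda_s \geq Y_j$.

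The main subtlety is that the Hölder lower bound $V^{1/p - 1/s}$ may be less than $1$ when $V > 1$, so $\|u_s\|_{L^p(g)} \geq 1$ does not hold uniformly in $s$ — only in the limit does the factor $V^{2(1/p - 1/s)}$ restore to $1$. The sign hypothesis $\lambda_s \geq 0$ is used precisely to guarantee $Y_j \geq 0$, so that multiplying the Hölder lower bound by $Y_j$ preserves its direction; without this sign control, one would merely get $\lambda_s \geq Y_j V^{2(1/p-1/s)}$ with an uncontrolled sign of the prefactor, and the argument would collapse.
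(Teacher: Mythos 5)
The paper does not prove this lemma at all --- it is quoted from Aubin \cite{Aubin1} without proof --- so there is no in-paper argument to compare against; judged on its own, your proof is correct and is essentially the standard argument (as in Aubin, or Lemma 4.7 of Lee--Parker \cite{Lee-John}, where the volume is normalized to $1$ so the H\"older factor $V^{1/s-1/p}$ disappears). Both halves are sound: the upper bound follows from $\|u\|_{L^s(g)}\to\|u\|_{L^p(g)}$ for a fixed near-minimizer of $Q_p$ via dominated convergence on the finite-volume ball, and the lower bound correctly uses the sign hypothesis (which forces $Y_j\geq 0$) together with $Y_j\|u_s\|_{L^p(g)}^2\leq\lambda_s$ and the H\"older estimate $\|u_s\|_{L^p(g)}\geq V^{1/p-1/s}\to 1$; the only external input, existence of the subcritical minimizers $u_s$, is supplied independently by the paper's next lemma, so there is no circularity.
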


\begin{lem}
For any $s\in(2,p)$, there exists $u_{s}\in C^{\infty}(\overline{B}_{j}(O))$, $u_{s}>0$ in $B_{j}(O)$, $u_{s}=0$ on $\partial B_{j}(O)$ and $\|u_{s}\|_{L^{s}}=1$ such that
$Q_{s}(u_{s})=\lambda_{s}$ and satisfying the following equation:
\begin{equation}\label{eq1}
\Delta u_{s}-c(n)R_{g}u_{s}+\lambda_{s} u_{s}^{s-1}=0.
\end{equation}
\end{lem}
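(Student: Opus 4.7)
The plan is to apply the direct method of the calculus of variations, exploiting that $s<p=\tfrac{2n}{n-2}$ is subcritical. I would first take a minimizing sequence $\{v_k\}\subset W_0^{1,2}(B_j(O))$ with $\|v_k\|_{L^s}=1$ and $Q_s(v_k)\to\lambda_s$. Replacing $v_k$ by $|v_k|$, which preserves $E_g(v_k)$ and the $L^s$ normalization, I may assume $v_k\geq 0$. Because $\overline{B}_j(O)$ is bounded and $R_g$ is smooth on it, H\"older's inequality gives $\|v_k\|_{L^2}\leq C\|v_k\|_{L^s}=C$, so $\left|c(n)\int_{B_j}R_g v_k^2\,d\v_g\right|$ is uniformly bounded; combined with $E_g(v_k)\to\lambda_s$ this yields a uniform $W_0^{1,2}$ bound on $\{v_k\}$.

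Second, since $s<p$ is subcritical, the Rellich--Kondrachov theorem supplies compactness of the embedding $W_0^{1,2}(B_j(O))\hookrightarrow L^s(B_j(O))\cap L^2(B_j(O))$. Passing to a subsequence, I obtain $v_k\rightharpoonup u_s$ weakly in $W_0^{1,2}$ and strongly in both $L^s$ and $L^2$, so $\|u_s\|_{L^s}=1$ and $u_s\geq 0$. Weak lower semicontinuity of the Dirichlet integral together with continuity of the potential term under $L^2$ convergence gives $E_g(u_s)\leq\liminf_k E_g(v_k)=\lambda_s$; by the definition of $\lambda_s$ the reverse inequality also holds, so $Q_s(u_s)=\lambda_s$.

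Third, the standard Lagrange-multiplier computation (equivalently, $\tfrac{d}{dt}Q_s(u_s+t\phi)\big|_{t=0}=0$ for every $\phi\in C_c^\infty(B_j(O))$) shows that $u_s$ is a nonnegative weak solution of \eqref{eq1}. A Moser iteration, using that the exponent $s-1<\tfrac{n+2}{n-2}$ is subcritical, upgrades $u_s$ to $L^\infty$, and then Schauder theory applied to the linear equation $\Delta u_s=(c(n)R_g-\lambda_s u_s^{s-2})u_s$ with smooth coefficients delivers $u_s\in C^\infty(\overline{B}_j(O))$. Strict positivity in $B_j(O)$ follows from the strong maximum principle applied to the same equation, viewed as a linear elliptic equation with bounded zeroth-order coefficient, using $u_s\not\equiv 0$.

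The main anticipated obstacle is that $R_g$ is not assumed nonnegative, so coercivity of $E_g$ on $W_0^{1,2}(B_j(O))$ is not automatic; however, boundedness of the domain together with boundedness of $R_g$ on $\overline{B}_j(O)$ defuses this through the H\"older bound in the first step. A secondary technicality is smoothness of $u_s$ up to $\partial B_j(O)$, which reduces to standard boundary regularity once $j$ is taken to be a regular value of $d(\cdot,O)$ (perturbing $j$ slightly if necessary to stay away from the cut locus of $O$).
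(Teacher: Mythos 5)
Your proposal follows essentially the same route as the paper: the direct method exploiting subcriticality of $s<p$ for Rellich--Kondrachov compactness, weak lower semicontinuity of the Dirichlet energy, a uniform $W^{1,2}_0$ bound via H\"older on the bounded ball, elliptic bootstrapping, and the strong maximum principle for positivity. The only (minor) difference is ordering: the paper first obtains $C^{2,\alpha}$ regularity and strict positivity, and only then upgrades to $C^{\infty}$, because the coefficient $u_s^{s-2}$ is merely H\"older where $u_s$ could vanish --- you should likewise invoke positivity before claiming ``smooth coefficients'' in your final Schauder step.
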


\begin{proof}
Take a minimizing sequence $\{u_{i}\}\subset  W_{0}^{1,2}(B_{j}(O))\backslash\{0\}$ s.t $Q_{s}(u_{i})\rightarrow \lambda_{s}$. Since $Q_{s}(|u|)\leq Q_{s}(u)$ and $Q_{s}(tu)=Q_{s}(u)$ , we can assume $u_{i}\geq 0$ and $\|u_{i}\|_{L^{s}}=1$. Then we have
$$Q_{s}(u_{i})=E_{g}(u_{i})=\|\nabla u_{i}\|^{2}_{2}+c(n)\int_{B_{j}(O)}R_{g}u_{i}^{2}d\v_{g}\rightarrow \lambda_{s}.$$
Hence we have $\|\nabla u_{i}\|^{2}_{L^{2}}\leq c_{1}+c_{2}\|u_{i}\|^{2}_{L^{2}}$. By H\"{o}lder inequality, we also have $$\|u_{i}\|^{2}_{L^{2}}\leq C(\v_{g}(B_j(O))\|u_{i}\|_{L^{s}}^{2}= C(\v_{g}(B_j(O)).$$
Therefore, $\{u_{i}\}$ is a bounded sequence in $W_{0}^{1,2}(B_{j}(O)$. Then, neglecting a subsequence, there exists $u_{s}\in W_{0}^{1,2}(B_{j}(O))$ such that $\{u_{i}\}$ converges weakly to $u_{s}$ in $W^{1,2}(B_{j}(O))$. On the other hand side, we also know that $W^{1,2}\hookrightarrow L^{r}$ is compactly embedded when $0\leq r< p $. Hence we have
\begin{eqnarray}
\|\nabla u_{s}\|_{L^{2}}\leq\liminf_{i\rightarrow\infty}\|\nabla u_{i}\|_{L^{2}},\\
\int R_{g}u_{i}^{2}\ d\v_{g}\rightarrow\int R_{g}u_{s}^{2}\ d\v_{g},\\
\|u_{s}\|_{L^{s}}=\lim_{i\rightarrow\infty}\|u_{i}\|_{L^{s}}=1.
\end{eqnarray}
Combining the above three inequalities we infer
$$Q_{s}(u_{s})\leq \liminf_{i\rightarrow\infty}Q_{s}(u_{i})=\lambda_{s}.$$
Then, the definition of $\lambda_{s}$ tells us $Q_{s}(u_{s})=\lambda_{s}$. This means that $u_{s}$ is the weak solution of \eqref{eq1}. Using $L^{p}$ estimate and Schauder estimate, we take a standard boot-strapping argument to deduce $u_{s}\in C^{2,\alpha}(\overline{B}_{j}(O))$.

Since $u_{i}\geq0$, it follows that $u_{s}\geq0$. Hence, it is easy to see that there exist some constant $c\geq0$ such that $\Delta u_{s}-cu_{s}\leq 0$. By the maximal principle, we have $u_{s}>0$ in $B_{j}(O)$. Since $t^{s-1}$ is a smooth function when $t>0$, it follows that $u_s^{s-1}$ is a smooth function. Hence, the standard elliptic theory tells us that $u_{s}\in C^{\infty}(\overline{B}_{j}(O))$.
\end{proof}

\begin{lem}\label{lem2.5}
$\{u_{s}|s_{0}\leq s<p\}$ is uniformly bounded with respect to $s$ for some constant $s_{0}\in (2,\, p)$.
\end{lem}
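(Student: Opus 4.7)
The plan is to establish a uniform $L^\infty$ bound on $u_s$ for $s\in[s_0,p)$ via Moser iteration, the crucial point being that the iteration constants remain bounded as $s\uparrow p$. First I would produce a uniform $L^p$ bound on $u_s$: since $\lambda_s$ is bounded by Lemma \ref{aubinlemma} and $E_g(u_s)=\lambda_s$, the boundedness of $R_g$ on the compact set $\overline{B_j(O)}$ combined with $\|u_s\|_{L^2}\leq C$ (obtained from H\"older applied to $\|u_s\|_{L^s}=1$ on a bounded domain, using $s\geq s_0>2$) gives $\|\nabla u_s\|_{L^2}\leq C$. The Sobolev embedding $W_0^{1,2}(B_j)\hookrightarrow L^p(B_j)$ then yields $\|u_s\|_{L^p}\leq C$ independent of $s$.

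Next, for $\beta\geq 0$, I would test \eqref{eq1} against the admissible function $u_s^{1+2\beta}\in W_0^{1,2}(B_j)$; setting $v=u_s^{1+\beta}$, integration by parts gives
\begin{equation*}
\frac{1+2\beta}{(1+\beta)^2}\int_{B_j}|\nabla v|^2 + c(n)\int_{B_j} R_g v^2 = \lambda_s\int_{B_j} u_s^{s+2\beta}.
\end{equation*}
Combining with the Sobolev inequality on $B_j(O)$ and the uniform bounds on $R_g$ and $\lambda_s$ produces a Moser-type recursion
\begin{equation*}
\|u_s\|_{L^{p(1+\beta)}}^{2(1+\beta)}\leq C(\beta)\bigl(\|u_s\|_{L^{s+2\beta}}^{s+2\beta}+\|u_s\|_{L^{2(1+\beta)}}^{2(1+\beta)}\bigr).
\end{equation*}
Since $s<p$ forces $s+2\beta<p(1+\beta)$, each step is a genuine gain; iterating along $\beta_0=0$, $\beta_{k+1}=(p(1+\beta_k)-s)/2$, one finds $p(1+\beta_k)\sim(p/2)^k\to\infty$, and the starting $L^p$ bound from the first paragraph feeds the recursion up to $L^\infty$.

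The main obstacle is to keep the accumulated constants uniformly controlled as $s\uparrow p$: the product $\prod_k C(\beta_k)^{1/(2(1+\beta_k))}$ must converge, and its $s$-dependence (which enters only through bounded quantities such as $\lambda_s$ and H\"older interpolation factors like $|B_j|^{(p-s)/p}$ used to compare the $L^{s+2\beta}$ and $L^{p(1+\beta)}$ norms) must not degenerate. Standard Moser bookkeeping handles this: $1+\beta_k$ grows geometrically while $C(\beta_k)$ grows only polynomially in $\beta_k$, so the product converges, and the $s$-dependent factors stay in a compact range for $s\in[s_0,p)$. Consequently $\|u_s\|_{L^\infty(B_j)}\leq C$ with $C$ independent of $s\in[s_0,p)$, which is the claimed uniform bound.
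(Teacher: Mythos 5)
Your setup (uniform $L^p$ bound from $E_g(u_s)=\lambda_s$, testing against $u_s^{1+2\beta}$, the recursion for $v=u_s^{1+\beta}$) is fine, but the iteration you build on it has a genuine gap: the constants are \emph{not} uniform as $s\uparrow p$. Your exponent sequence satisfies $1+\beta_{k+1}=\tfrac{p}{2}(1+\beta_k)-\tfrac{s-2}{2}$, so $1+\beta_k=\tfrac{s-2}{p-2}+\left(\tfrac{p}{2}\right)^{k}\tfrac{p-s}{p-2}$. The growth is eventually geometric, but the prefactor $p-s$ degenerates: the very first gain is $\beta_1=(p-s)/2\to 0$, and the number of steps needed before $1+\beta_k$ exceeds any fixed level $>1$ grows like $\log\frac{1}{p-s}$. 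During these ``slow'' steps each factor $C(\beta_k)^{1/(2(1+\beta_k))}$ is bounded below by a constant $>1$, so the accumulated product blows up as $s\to p$ (roughly like a negative power of $p-s$). Equivalently: your scheme never produces a bound in $L^{q}$ for a \emph{fixed} $q>p$ uniformly in $s$, which is exactly the crux of the lemma. Tellingly, your argument never uses the hypothesis $Y_j<\Lambda$ nor the conclusion of Lemma \ref{aubinlemma} that $\lambda_s$ stays strictly below $\Lambda$; without some such input the statement is false (minimizers can concentrate as $s\to p$ when $\lambda_s\to\Lambda$), so any proof must invoke it.

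The paper closes precisely this step differently. Writing $w=u_s^{1+b}$ for a \emph{fixed} small $b>0$, it estimates the critical term by H\"older as $\lambda_s\int u_s^{s-2}w^2\le \lambda_s\|u_s\|_{L^{n(s-2)/2}}^{s-2}\|w\|_{L^p}^2$ with $\|u_s\|_{L^{n(s-2)/2}}\le C(s)\|u_s\|_{L^s}=C(s)\to 1$, and then applies the \emph{sharp} Sobolev inequality so that the resulting coefficient of $\|w\|_{L^p}^2$ on the right is $(1+\epsilon)\tfrac{(1+b)^2}{1+2b}\tfrac{\lambda_s}{\Lambda}$. Since Lemma \ref{aubinlemma} and $Y_j<\Lambda$ give $\lambda_s/\Lambda\le\mu<1$ for $s$ near $p$, this term can be absorbed into the left-hand side, yielding $\|u_s\|_{L^{p(1+b)}}\le C$ for a fixed $b>0$ independent of $s$ in a single step; elliptic $L^p$ estimates and Sobolev embedding then finish. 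To repair your argument you would need to replace your first supercritical iteration step by this absorption mechanism (after which your iteration, or simply bootstrapping, does go through with uniform constants).
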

\begin{proof}
Since each $u_{s}$ satisfies the equation \eqref{eq1} and $u_{s}=0$ on $\partial B_{j}(O)$. Let $b>0$ be a constant which will be determined later. Multiplying \eqref{eq1} both side by $u_{s}^{1+2b}$ and integrating by parts, we obtain
$$\int_{B_{j}(O)}\left(\langle\nabla u_{s},(1+2b)u_{s}^{2b}\ \nabla u_{s}\rangle+c(n)R_{g}u_{s}^{2+2b}\right)\ d\v_{g}=\lambda_{s}\int_{B_{j}(O)}u_{s}^{s+2b}\ d\v_{g}.$$
If we set $w=u_{s}^{1+b}$, then the above equality can be written as
$$ \frac{1+2b}{(1+b)^{2}}\int_{B_{j}(O)}|\nabla w|^{2}d\v_{g}=\int_{B_{j}(O)}\left(\lambda_{s}w^{2}u_{s}^{s-2}-c(n)R_{g}w^{2}\right)d\v_{g}.$$
Now, applying the sharp Sobolev inequality, for any $\epsilon>0$, there exists some $C(\epsilon)$ such that
\begin{align}\label{S:1}
\|w\|_{L^{p}}^{2}&\leq\frac{(1+\epsilon)}{\Lambda}\int_{B_{j}(O)}|\nabla w|^{2}d\v_{g}+C(\epsilon) \int_{B_{j}(O)}w^{2}d\v_{g}\nonumber\\
&\leq(1+\epsilon)\frac{(1+b)^{2}}{1+2b}\int_{B_{j}(O)}\frac{\lambda_{s}}{\Lambda}w^{2}u_{s}^{s-2}d\v_{g}+C'(\epsilon)\|w\|_{L^{2}}^{2}\nonumber\\
&\leq(1+\epsilon)\frac{(1+b)^{2}}{1+2b}\frac{\lambda_{s}}{\Lambda}\|w\|_{L^{p}}^{2}\|u_{s}\|_{L^{n(s-2)/2}}^{s-2}+C'(\epsilon)\|w\|_{L^{2}}^{2}.
\end{align}
Since $s<p$, we have $(s-2)n/2<s$. By H\"{o}lder inequality, we have $$\|u_{s}\|_{L^{n(s-2)/2}}\leq C(s)\|u_{s}\|_{L^{s}}=C(s),$$ where $C(s)\rightarrow 1$ as $s$ tends to $p$.

Now, we need to consider the following two cases:

 Case 1: $0\leq Y_{j}<\Lambda$. In this case we have $\lambda_{s}\geq 0$. Moreover, by Lemma \ref{aubinlemma} we know that there exists some $s_{0}\in(0, p)$ such that there holds $\lambda_{s}/\Lambda\leq\mu<1$ for any $s\in [s_{0}, p)$. Thus, we can choose $\epsilon$ and $b$ small enough such that the coefficient of the first term above
$$(1+\epsilon)\frac{(1+b)^{2}}{1+2b}\frac{\lambda_{s}}{\Lambda}<1.$$
So, it follows from (\ref{S:1}) that
$$\|w\|_{L^{p}}^{2}\leq C\|w\|_{L^{2}}^{2}.$$

Case 2: $Y_{j}<0$. For this case the same result holds obviously. Indeed, as $Y_{j}$ is less than zero, it follows Lemma 2.3 that $\lambda_{s}$. We apply the H\"{o}lder inequality to derive
$$\|w\|_{L^{2}}=\|u_{s}\|_{L^{2(1+b)}}^{1+b}\leq C\|u_{s}\|_{L^{s}}^{1+b}\leq C.$$
Therefore, we have $\|w\|_{L^{p}}=\|u_{s}\|_{L^{p(1+b)}}^{1+b}$ is bounded uniformly with respect to $s$. By $L^{p}$ estimates and Sobolev embedding theorem, we know that the lemma is true.
\end{proof}

\noindent {\bf Proof of Proposition \ref{solution}.}
By Lemma \ref{lem2.5}, we know $u_{s}$ is uniform bounded in $C^{k,\alpha}(\overline{B}_{j}(O))$. Hence, there exists a subsequence of $\{u_s\}$ which converges to a solution of $(2.1)$ and $(2.2)$.

\medskip

\subsection{Pointed Cheeger-Gromov topology} At the last part of this section, we  recall the definition of convergence of manifolds under the pointed Cheeger-Gromov topology.\begin{defi}\label{cheeger}
A sequence of pointed complete Riemann manifolds is said to converge in pointed $C^{m,\alpha}$ Cheeger-Gromov topology $(M_{i},p_{i}, g_{i})\rightarrow(M,p,g)$ if for every $R>0$ we can find a domain $B_{R}(p)\subset\Omega\subset M$ and embeddings $F_{i}:\Omega\rightarrow M_{i}$ for large $i$ such that $F_{i}(\Omega)\supset B_{R}(p_{i})$ and $F_{i}^{*}g_{i}\rightarrow g$ on $\Omega$ in the $C^{m,\alpha}$ topology.
\end{defi}
Note that $C^{m,\alpha}$ type convergence implies pointed  Gromov-Hausdorff convergence.

\section{Proof of theorem \ref{mainthm1}}\label{proof1}
 We proceed now to the proof of Theorem \ref{mainthm1}. Its proof will be divided into four steps. The basic idea we used here is to employ the finite domain exhaustion of $M$ and then consider the subsolution sequence $u_{i}$ of Yamabe equations corresponding to this exhaustion. A crucial step is to establish a decay estimate of $u_{i}$ near infinity.
\subsection{Step 1.}
 By condition $Y(M)<Y_{\infty}(M)$ and  Lemma \ref{lem2.1}, we have $Y(M)<\Lambda$. On the other side, by the definition of $Y_{j}$, we know that $\{Y_{j}\}$ converges decreasingly to $Y(M)$. So, when $j$ is large enough, we have $Y_{j}<\Lambda$. Using Theorem \ref{solution}, we know there is a positive solution $u_{j}$ solving the following equation:
\begin{eqnarray*}
\Delta u_{j}-c(n)R_{g}u_{j}+Y_{j}u_{j}^{p-1}&=&0,\s \mbox{in} \s B_{j}(O),\\
u_{j}&=&0,\s \mbox{on} \s \partial B_{j}(O).
\end{eqnarray*}
Next, we extend $u_{j}$ to the whole manifold by defining $u_{j}(x)=0$ when $x\notin B_{j}(O)$. The extended function, we still denote it by $u_{j}$, is continuous and  a subsolution to the equation
$$\Delta u-c(n)R_{g}u+Y_{j}u^{p-1}=0,\s \mbox{on} \s M.$$
\subsection{Step 2.}\label{step2}
In this step, we will establish a priori decay estimate for $\{u_{j}\}.$
\begin{lem}\label{lem3.1}
Then there exists a $\rho_{0}(n,Y(M),Y_{\infty}(M))>0$, such that for any $\rho<\rho_{0}$, if $ \v_{g}(B(O,r))\leq Cr^{n+\rho}$ for all large r,, then we have
$$\lim_{d(x)\rightarrow \infty}\lim_{j\rightarrow \infty}d(x)^{\alpha}u_{j}(x)=O(1),$$
where $\rho$ can be negative and $\alpha=\alpha(\rho,n)>0$.
\end{lem}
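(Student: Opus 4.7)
The plan, inspired by Q.~Zhang and modified to accommodate the non-Euclidean volume growth $\rho$, consists of an $L^p$-tail estimate on $u_j$ followed by a rescaled Moser iteration to pass to pointwise bounds. Throughout, set $\alpha(R) := \int_{M \setminus B(O, R)} u_j^p\,d\v_g$; note $\alpha(R)\le 1$ by the normalization $\|u_j\|_{L^p}=1$.

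First I would establish a uniform (in $j$) polynomial decay $\alpha(R) \leq C R^{-\beta}$ for some $\beta = \beta(n, \rho) > 0$. Pick $\eta \in C_c^\infty(M)$ with $\eta \equiv 0$ on $B(O, R)$, $\eta \equiv 1$ on $M \setminus B(O, 2R)$, and $|\nabla \eta| \leq C/R$. Testing the equation for $u_j$ against $u_j \eta^2$ and integrating by parts yields
$$ E_g(u_j \eta) = Y_j \int_M u_j^p \eta^2 \, d\v_g + \int_M u_j^2 |\nabla \eta|^2 \, d\v_g. $$
Since $u_j\eta$ is compactly supported in $M\setminus B(O,R)$, the definition of $Y(M\setminus B(O,R))$ gives $Y(M\setminus B(O,R))\|u_j \eta\|_{L^p}^2 \leq E_g(u_j\eta)$. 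H\"older with exponents $p/2$ and $p/(p-2)$ yields $\int u_j^p \eta^2 \leq \|u_j \eta\|_{L^p}^2 \cdot \alpha(R)^{(p-2)/p}$. Since $Y(M \setminus B(O, R)) \to Y_\infty(M) > Y(M) = \lim_j Y_j$, one has the quantitative gap $Y(M\setminus B(O,R)) - Y_j \geq \tfrac{1}{2}(Y_\infty(M)-Y(M))$ for $R$ and $j$ large. Once $\alpha(R)$ is sufficiently small, the term $Y_j \alpha(R)^{(p-2)/p}\|u_j\eta\|_{L^p}^2$ is absorbed into the left side. Controlling the remainder by H\"older and the volume hypothesis gives
$$ \int_M u_j^2 |\nabla \eta|^2 \, d\v_g \leq \frac{C}{R^2}\,\v_g(B(O,2R))^{2/n}\,\alpha(R)^{2/p} \leq C R^{2\rho/n} \alpha(R)^{2/p}, $$
and $\|u_j \eta\|_{L^p}^p \geq \alpha(2R)$ yields the recursion
$$ \alpha(2R)^{2/p} \leq C R^{2\rho/n} \alpha(R)^{2/p}. $$
For $\rho<\rho_0(n,Y(M),Y_\infty(M))$ with $\rho_0$ the threshold at which this recursion becomes contractive (balancing the volume-growth exponent against the Sobolev constant), iteration yields $\alpha(R) \leq C R^{-\beta}$ uniformly in $j$.

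Second, for the pointwise estimate at $x$ with $r := d(x)$ large, I would rescale the metric on $B_g(x, r/2)$ by $\tilde g := r^{-2} g$. Since $d(y) \geq r/2$ on this ball, the hypothesis $R_g \geq -C d^{-2}$ gives $\tilde R_g = r^2 R_g \geq -4C$, a uniform bound which is essential for Moser constants to be scale-free. Setting $\tilde u_j(y) := r^{(n-2)/2} u_j(y)$, conformal invariance of the Yamabe equation produces a solution of $\Delta_{\tilde g}\tilde u_j - c(n)\tilde R_g \tilde u_j + Y_j \tilde u_j^{p-1} = 0$ on the unit-radius ball $B_{\tilde g}(x, 1/2)$, with $\|\tilde u_j\|_{L^p(B_{\tilde g}(x,1/2))}^p = \|u_j\|_{L^p(B_g(x,r/2))}^p \leq \alpha(r/2) \leq Cr^{-\beta}$. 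Because this local $L^p$-norm is small, the standard Moser iteration for the critical Yamabe equation (the smallness absorbs the critical nonlinearity at each step of the iteration) yields $\tilde u_j(x) \leq C \|\tilde u_j\|_{L^p(B_{\tilde g}(x, 1/2))} \leq C r^{-\beta/p}$. Unscaling yields $u_j(x) \leq C r^{-(n-2)/2 - \beta/p}$, i.e.\ the stated bound with $\alpha(n,\rho) := (n-2)/2 + \beta(n,\rho)/p > 0$; the iterated limit in the statement then follows by passing to a diagonal subsequence.

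The main obstacle is the uniform (in $j$) smallness of the initial tail $\alpha(R_0)$, needed both to launch the iteration in Step~1 and to extract the threshold $\rho_0$. A priori nothing prevents $u_j$ from concentrating a positive fraction of its $L^p$-mass near infinity as $j\to\infty$. The mechanism that rules this out is precisely the strict gap $Y(M) < Y_\infty(M)$: were mass to escape to infinity, the Yamabe quotient of $u_j$ would be forced toward $Y_\infty(M)$ rather than $Y(M)$, contradicting $Y_j \to Y(M)$. Making this concentration-compactness-type argument quantitative, and tracking how the resulting threshold interacts with the volume exponent $\rho$ and the Sobolev constant in the recursion, is where $\rho_0$ emerges and where the bulk of the technical work lies.
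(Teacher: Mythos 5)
Your overall architecture (a Caccioppoli estimate tested against the exterior Yamabe quotient, rescaling at the point, Moser iteration) is close in spirit to the paper's, and you correctly identify the gap $Y(M)<Y_{\infty}(M)$ as the absorption mechanism. But there are two genuine gaps. First, your Step 1 recursion $\alpha(2R)^{2/p}\leq CR^{2\rho/n}\alpha(R)^{2/p}$ is contractive only when $CR^{2\rho/(n-2)}<1$, i.e.\ essentially only for $\rho<0$; it yields no tail decay whatsoever for $0\leq\rho<\rho_{0}$, whereas the lemma asserts a strictly \emph{positive} threshold $\rho_{0}(n,Y(M),Y_{\infty}(M))$. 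The paper never proves (nor needs) decay of the $L^{p}$ tail: it absorbs the critical term using only $\|u_{j}\|_{L^{p}}\leq1$ together with the strict inequality $(\beta_{0}^{2}+\epsilon)Y_{j}/Y\left(M\backslash B_{R^{2}/2}(O)\right)<1$, and the pointwise decay of $u_{j}$ comes instead from the mismatch between the unscaling weight $R^{-(n-2)}$ (with $d(x_{0})=2R^{2}$ and $v_{j}=R^{n-2}u_{j}$) and the polynomially \emph{growing} bound $\|v_{j}\|_{L^{\infty}}\leq CR^{(n-2)\rho/(n(\beta_{0}-1))}$ that the iteration produces; the condition $\rho<n(\beta_{0}-1)=\rho_{0}$ is precisely what makes the former win, and the admissible size of $\beta_{0}$ (hence of $\rho_{0}$) is dictated by $\beta_{0}<\sqrt{Y_{\infty}(M)/Y(M)}$ and $\beta_{0}<n/(n-2)$. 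Your tail-decay mechanism structurally cannot produce this positive threshold.

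Second, your Step 2 invokes a ``standard Moser iteration'' giving $\tilde u_{j}(x)\leq C\|\tilde u_{j}\|_{L^{p}(B_{\tilde g}(x,1/2))}$ with $C$ independent of $x$ and $R$. Under the hypotheses there is no local Sobolev or Poincar\'e inequality with uniform constant on the rescaled balls (no injectivity radius or volume lower bound, no two-sided curvature bound), and the volume of the rescaled unit ball is controlled only from above by $CR^{2\rho}$, which is unbounded when $\rho>0$; a Moser constant independent of all this data cannot simply be asserted. The paper's substitute is exactly the exterior Yamabe constant $Y\left(M\backslash B_{R^{2}/2}(O)\right)\geq\frac{1}{2}Y_{\infty}(M)>0$, which serves as the Sobolev constant for functions supported far from $O$, and the $R^{2\rho}$ volume factor is then carried explicitly through each H\"older step of the iteration --- this is where the exponent $\frac{(n-2)\rho}{n(\beta_{0}-1)}$, and hence $\rho_{0}$, actually arises. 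Hiding these dependencies inside the constant $C$ conceals the entire content of the lemma.
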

\begin{proof}
Given $R>1$, first we fix a point $x_{0}\in M$ such that $d(x_{0})=2R^{2}$, then we scale the metric by $\widetilde{g}=g/R^{4}$. Let $d_{1}$, $\nabla_{1}$, $R_{\widetilde{g}}$, $\Delta_{1}$ and $d\v_{\widetilde{g}}$ be the corresponding distance, gradient, scalar curvature, Laplace-Beltrami operator and volume element with respect to the rescaled manifold $\left(M,\widetilde{g}\right)$. Define $v_{j}(x)=R^{n-2}u_{j}(x)$.
Since
$$\Delta u_{j}-c(n)R_{g}u_{j}+Y_{j}u_{j}^{p-1}\geq 0,\s on \s M.$$
A direct computation shows
\begin{eqnarray}
\Delta_{1}v_{j}-c(n)R_{\widetilde{g}}v_{j}+Y_{j}v_{j}^{p-1}=R^{n+2}\left(\Delta u_{j}-c(n)R_{g}u_{j}+Y_{j}u_{j}^{p-1}\right)\geq0,\label{eq3}\\
\int_{d_{1}(x_{0},x)\leq 1}v_{j}^{p}\ d\v_{\widetilde{g}}=\int_{d(x_{0},x)\leq R^{2}}u_{j}^{p}\ d\mbox{vol}_{g}\leq\int_{M}u_{j}^{p}\ d\v_{g}= 1.\label{eq4}
\end{eqnarray}

Take $\phi\in C^{\infty}[0,\infty)$ such that $0\leq\phi\leq1$ and $|\phi'(r)|\leq C$, which satisfies that $\phi(r)=1$, when $r\in [0,1/2]$; $\phi(r)=0$, when $r\in [1,\infty)$. Let $G(s)=s^{\beta}$ and define
$$ F(t)=\int_{0}^{t}G'(s)^{2}ds=\frac{\beta^{2}}{2\beta-1}t^{2\beta-1}.$$
By a simple computation, we see that, as $\beta>1$, there holds true
\begin{equation}\label{relation}
sF(s)\leq s^{2}G'(s)^{2}=\beta^{2}G(s)^{2} .
\end{equation}

Let $\eta(x)=\phi(d_{1}(x_{0},x))$, we know the support $\mbox{spt}(\eta^{2}F(v))\subseteq M\setminus B_{R^{2}/2}(O)$. We multiply \eqref{eq3} by $\eta^{2}F(v)$ and then integrate by parts to obtain that, for some $\epsilon>0$,
\begin{align*}
(1-\epsilon)&\|\int|\nabla_{1}v|^{2}G'(v)^{2}\eta^{2}d\v_{\widetilde{g}}\leq\beta^{2}\epsilon^{-1}\int|\nabla_{1}\eta|^{2}G(v)^{2}d\v_{\widetilde{g}}\\
&-\frac{\beta^{2}}{2\beta-1}\int c(n)R_{\widetilde{g}}G(v)^{2}\eta^{2}d\v_{\widetilde{g}}+Y_{j}\int v^{p-2}vF(v)d\v_{\widetilde{g}},
\end{align*}
where $\epsilon $ may be chosen arbitrarily small. By the condition $R_{g}\geq-C d^{-2}(x,O)$, we have that, when $R$ is large enough, $R_{\widetilde{g}}\geq -C$. Hence, from the above inequality it follows
\begin{align}\label{maineq}
 &\|\nabla_{1}(G(v)\eta)\|_{L^{2}}^{2}+\int c(n)R_{\widetilde{g}}(G(v)\eta)^{2}d\v_{\widetilde{g}}\nonumber\\
\leq& C\beta^{2}\|\nabla_{1}\eta\|_{L^{\infty}}^{2}\int G(v)^{2}d\v_{\widetilde{g}}+\frac{Y_{j}}{1-\epsilon}\int v^{p-1}F(v)\eta^{2}d\v_{\widetilde{g}}.
\end{align}

Next, we need to consider the following two cases:

\noindent {\bf Case 1:} $Y(M)\geq0$. Since $Y_{j}$ converges decreasingly to $Y(M)$, it follows that $Y_{j}\geq 0$. On the other side, by the assumption $Y_{\infty}(M)>0$ and the fact $Y(M\backslash B_{r}(O))$ increases with respect to $r$, we have $Y(M\backslash B_{r}(O)>0$ when $r$ is large enough. Let
$$C_{0}=\frac{1}{Y\left(M\backslash B_{\frac{R^{2}}{2}}(O)\right)}.$$
It is easy to see that $C_{0}>0$ as $R$ is large. Since $\mbox{spt}(\eta^{2}F(v))\subseteq M\backslash B_{R^{2}/2}(O)$, by the definition of the Yamabe quotient we have
$$\|G(v)\eta\|_{L^{p}}^{2}\leq C_{0}\|\nabla_{1}(G(v)\eta)\|_{L^{2}}^{2}+C_{0}\int_{d_{1}(x_{0},x)\leq1}c(n)R_{\widetilde{g}}[G(v)\eta]^{2}d\v_{\widetilde{g}}.$$
Here, all the norms were taken on the domain $d_{1}(x_{0},x)\leq1$ with respect to the rescaled metric $\widetilde{g}$. By the fact that $Y_{j}\geq 0$ and \eqref{relation}, we obtain
\begin{eqnarray}
\|G(v)\eta\|_{L^{p}}^{2}
&\leq& CC_{0}\beta^{2}\|\nabla_{1}\eta\|_{L^{\infty}}^{2}\int G(v)^{2}\ d\v_{\widetilde{g}}+C_{0}Y_{j}(\beta^{2}+\epsilon)\int v^{p-2}G(v)^{2}\eta^{2}\ d\v_{\widetilde{g}}\label{eq8}\\
&\leq& CC_{0}\beta^{2}\|\nabla_{1}\eta\|_{L^{\infty}}^{2}\int G(v)^{2}\ d\v_{\widetilde{g}}+C_{0}Y_{j}(\beta^{2}+\epsilon)\|G(v)\eta\|_{L^{p}}^{2}\left(\int v^{p}\ d\v_{\widetilde{g}}\right)^{\frac{2}{n}}\nonumber\\
&\leq&CC_{0}\beta^{2}\|\nabla_{1}\eta\|_{L^{\infty}}^{2}\int G(v)^{2}\ d\v_{\widetilde{g}}+C_{0}Y_{j}(\beta^{2}+\epsilon)\|G(v)\eta\|_{L^{p}}^{2}.\label{eq5}
\end{eqnarray}
Here we have used inequality \eqref{eq4} in the last inequality.

By the assumption $Y(M)<Y_{\infty}(M)$, we have, when $R$ sufficiently large,
$$Y(M)<Y\left(M\backslash B_{\frac{R^{2}}{2}}(O)\right).$$
Noting $Y_{j}\downarrow Y(M)$ as $j\rightarrow\infty$, we have that, when $j$ is large enough,
$$Y_{j}<Y\left(M\backslash B_{\frac{R^{2}}{2}}(O)\right)=\frac{1}{C_{0}}.$$
Hence, there exists $\beta_{0}>1$ such that, for all $j$ and small enough $\epsilon$,
\begin{equation}\label{eq6}
(\beta_{0}^{2}+\epsilon)C_{0}Y_{j}<1.
\end{equation}
Substitute \eqref{eq6} to \eqref{eq5} to obtain
\begin{equation}\label{S:2}
\|v^{\beta_{0}}\eta\|_{L^{p}}^{2}\leq C \|\nabla_{1}\eta\|_{L^{\infty}}^{2}\int v^{2\beta_{0}}d\v_{\widetilde{g}}.
\end{equation}
By the definition of $\eta$ and H\"{o}lder inequality, we infer from (\ref{S:2})
\begin{eqnarray}
\left(\int_{B_{1}(x_{0},\frac{1}{2})}v^{\frac{2n\beta_{0}}{n-2}}\ d\v_{\widetilde{g}}\right)^{\frac{n-2}{n}}&\leq&\|v^{\beta_{0}}\eta\|_{L^{p}}^{2}\leq C\int_{B_{1}(x_{0},1)}v^{2\beta_{0}}\ d\v_{\widetilde{g}}\nonumber\\
&\leq& C\left(\int_{B_{1}(x_{0},1)}v^{p}\ d\v_{\widetilde{g}}\right)^{\frac{\beta_{0}(n-2)}{n}}\left(\v_{\widetilde{g}}(B_{1}(x_{0},1))\right)^{\frac{n-(n-2)\beta_{0}}{n}}\nonumber\\
&\leq& C\left(\v_{\widetilde{g}}(B_{1}(x_{0},1))\right)^{\frac{n-(n-2)\beta_{0}}{n}}.
\end{eqnarray}

Now, we proceed to a consideration of the possible growth rate of volume of geodesic ball such that the Yamabe equation \eqref{Yamabe equation} on $M$ is solvable. If $\v_{g}\left(B(O,r)\right)\leq Cr^{n+\rho}$ for all $r$ large, then we have
$$\v_{\widetilde{g}}(B_{1}(x_{0},1)))=\frac{\v_{g}\left(B_{R^{2}}(x_{0})\right)}{R^{2n}}\leq\frac{\v_{g}\left(B_{4R^{2}}(O)\right)}{R^{2n}}\leq CR^{2\rho}.$$
Therefore, we obtain the following
\begin{equation}\label{eq7}
\left(\int_{B_{1}(x_{0},\frac{1}{2})}v^{\frac{2n\beta_{0}}{n-2}}d\v_{\widetilde{g}}\right)^{\frac{n-2}{n}}\leq CR^{\frac{2\rho[n-(n-2)\beta_{0}]}{n}}.
\end{equation}
Subsequently we will use a standard Moser iteration argument to finish the proof the lemma.
Given $0<r_{2}<r_{1}<\frac{1}{2}$, by taking $G(v)=v^{\beta}$ we have
\begin{eqnarray*}
\int_{B_{1}(x_{0},r_{1})}v^{p-2}G(v)^{2}\ d\v_{\widetilde{g}}&\leq&\left(\int_{B_{1}(x_{0},r_{1})}v^{\frac{2n\beta_{0}}{n-2}}\ d\v_{\widetilde{g}}\right)^{\frac{2}{n\beta_{0}}}\left(\int_{B_{1}(x_{0},r_{1})}G(v)^{\frac{2n\beta_{0}}{n\beta_{0}-2}}\ d\v_{\widetilde{g}}\right)^{\frac{n\beta_{0}-2}{n\beta_{0}}}\\
&\leq&\left(\int_{B_{1}(x_{0},r_{1})}v^{\frac{2n\beta_{0}}{n-2}}\ d\v_{\widetilde{g}}\right)^{\frac{2}{n\beta_{0}}}\left(\int_{B_{1}(x_{0},r_{1})}G(v)^{\frac{2n\delta}{n-2}}\ d\v_{\widetilde{g}}\right)^{\frac{n-2}{n\delta}},
\end{eqnarray*}
where
$$\delta=\frac{(n-2)\beta_{0}}{n\beta_{0}-2}<1.$$
Therefore, by combining \eqref{eq7} with the above inequality we have
\begin{equation}\label{eq9}
\int_{B_{1}(x_{0},r_{1})}v^{p-2}G(v)^{2}d\v_{\widetilde{g}}\leq CR^{\frac{4\rho}{n-2}\frac{n-(n-2)\beta_{0}}{n\beta_{0}}}\left(\int_{B_{1}(x_{0},r_{1})}G(v)^{\frac{2n\delta}{n-2}}d\v_{\widetilde{g}}\right)^{\frac{n-2}{n\delta}}.
\end{equation}

Noting that
$$\frac{n\delta}{n-2}=\frac{n\beta_{0}}{n\beta_{0}-2}>1,$$
we use again the H\"{o}lder inequality to obtain
\begin{eqnarray}
\int_{B_{1}(x_{0},r_{1})}G(v)^{2}\ d\v_{\widetilde{g}}&\leq&\left(\int_{B_{1}(x_{0},r_{1})}G(v)^{\frac{2n\delta}{n-2}}\ d\v_{\widetilde{g}}\right)^{\frac{n-2}{n\delta}}\ \v_{\widetilde{g}}\left(B_{1}(x_{0},1)\right)^{\frac{n\delta-n+2}{n\delta}}\nonumber\\
&\leq& C\left(\int_{B_{1}(x_{0},r_{1})}G(v)^{\frac{2n\delta}{n-2}}\ d\v_{\widetilde{g}}\right)^{\frac{n-2}{n\delta}}R^{\frac{4\rho}{n\beta_{0}}}\label{eq10}.
\end{eqnarray}

For $0<r_{2}<r_{1}<\frac{1}{2}$, we choose $\eta$ to be a radial function, supported in $B_{1}(x_{0},r_{1})$, such that $\eta=1$ if $x\in B_{1}(x_{0},r_{2})$ and $|\nabla_{1}\eta|\leq \frac{2}{r_{1}-r_{2}}$. We also note that \eqref{eq8} remains valid for such $\eta$ and any fixed $\beta>1$,i.e.
\begin{eqnarray}\label{S:3}
\|G(v)\eta\|_{L^{p}}^{2}
\leq CC_{0}\beta^{2}\|\nabla_{1}\eta\|_{L^{\infty}}^{2}\int G(v)^{2}\ d\v_{\widetilde{g}}+C_{0}Y_{j}(\beta^{2}+\epsilon)\int v^{p-2}G(v)^{2}\eta^{2}\ d\v_{\widetilde{g}}.
\end{eqnarray}
Substituting \eqref{eq9} and \eqref{eq10} into the right hand side of (\ref{S:3}), we obtain
\begin{equation}
\|G(v)\chi_{r_{2}}\|_{L^{\frac{2n}{n-2}}}\leq C\frac{R^{\frac{2\rho}{n\beta_{0}}}\beta}{r_{1}-r_{2}}\|G(v)\chi_{r_{1}}\|_{L^{\frac{2n\delta}{n-2}}}.
\end{equation}
Here $\chi_{r_{i}}$ is the characteristic function of $B_{1}(x_{0},r_{i})$.
\medskip

By taking $\beta=\delta^{-m}$ and $r_{m}=\frac{r_{1}(2+2^{-m})}{4}$, the standard Moser iteration shows that
$$\|v\|_{L^{\infty}(B_{1}(x_{0},\frac{r_{1}}{2}))}\leq CR^{\frac{2\rho\delta}{n\beta_{0}(1-\delta)}}=CR^{\frac{(n-2)\rho}{n(\beta_{0}-1)}}.$$
Note that, if $\frac{(n-2)\rho}{n(\beta_{0}-1)}< n-2$, i.e., $\rho<n(\beta_{0}-1)$, there holds
$$ u_{j}(x_{0})=\frac{v_{j}(x_{0})}{R^{n-2}}\leq Cd^{-\alpha}(x_{0}),$$
where $$\alpha=\frac{n-2}{2}-\frac{(n-2)\rho}{2n(\beta_{0}-1)}.$$

From the above arguments, we know that $\rho_{0}$ can be chosen as $n(\beta_{0}-1)$. Here, $\beta_{0}$ should be chosen such that \eqref{eq6} holds as well as $\beta_{0}<\frac{n}{n-2}$. So, by a simple computation, we have $$\rho_{0} = \min\left(n\sqrt{\frac{Y_{\infty}}{Y(M)}}-n,\, \frac{2n}{n-2}\right).$$
\medskip

\noindent {\bf Case 2:} $Y(M)< 0$. In this case, we have $Y_{j}\leq 0$ when $j$ sufficiently large, since $\{Y_{j}\}$ converges decreasingly to $Y(M)$. Thus we can directly drop the last term in \eqref{maineq}. Then \eqref{eq8} turns out to be
\begin{equation}
\|G(v)\eta\|_{L^{p}}^{2}\leq CC_{0}\beta^{2}\|\nabla_{1}\eta\|_{L^{\infty}}^{2}\int G(v)^{2}\ d\v_{\widetilde{g}}.
\end{equation}
For the present situation, we may directly choose $\rho_{0}=\frac{2n}{n-2}$ and take the same argument as in Case 1 to deduce
 $$\lim_{d(x)\rightarrow \infty}\lim_{j\rightarrow \infty}d(x)^{\alpha}u_{j}(x)=O(1)$$
where $\alpha=\frac{n-2}{4n}(2n-\rho(n-2))$. Thus we complete the proof.
\end{proof}

\subsection{Step 3.}\label{step3} Now we turn to showing $\{u_{j}\}$ is uniformly bounded with respect to $j$. For this purpose, we prove it by contradiction. If not, then there exists a subsequence $\{k\}\subseteq\{j\}$, $z_{k}\in M$, such that
$$u_{k}(z_{k})=\max u_{k}\triangleq m_{k}\rightarrow +\infty.$$
By Lemma \ref{lem3.1}, we know there exists a sufficiently large $R_{0}$ such that $z_{k}\in B_{R_{0}}(O)$ . Thus we can assume $z_{k}\rightarrow z_{0}$. Take a normal coordinate system at $z_{0}$. It is well-known that, in the normal coordinate system, we have
$$ g_{ij}(x)=\delta_{ij}+O(|x|^{2}),\s\s \mbox{and}\s\s det(g_{ij}(x))=1+O(|x|^{2}).$$
Denote the coordinates of $z_{k}$ at this atlas by $x_{k}$. Then, $x_{k}\rightarrow 0$ as $k\rightarrow \infty$. With respect to this coordinate chart, $u_{k}$ satisfies the following equation:
\begin{equation}
\frac{1}{\sqrt{\det g}}\partial_{i}(\sqrt{\det g}g^{ij}\partial_{j}u_{k}) -c(n)R_{g}(x)u_{k}+Y_{k}u_{k}^{p-1}=0
\end{equation}
Without loss of generality, we may assume the above equation can be defined in $\{x:|x|<1\}$. Now define
$$v_{k}=m_{k}^{-1}u_{k}(\delta_{k}x+x_{k})$$
where $\delta_{k}=m_{k}^{1-p/2}\rightarrow 0.$ Then $v_{k}$ can be defined on the ball centered at $0$ with radius $\rho_{k}=(1-|x_{k}|)/\delta_{k}\rightarrow \infty$ in $\mathbb{R}^{n}$. Moreover, $v_{k}$ satisfying the following equation
\begin{equation}\label{equa1}
\frac{1}{b_{k}}\partial_{i}(b_{k}a^{ij}_{k}\partial_{j}v_{k})-c_{k}v_{k}+Y_{k}v_{k}^{p-1}=0,
\end{equation}
where
\begin{eqnarray}
a_{k}^{ij}(x)&=&g^{ij}(\delta_{k}x+x_{k})\rightarrow \delta_{ij},\label{eq20}\\
b_{k}(x)&=&\sqrt{\det g(\delta_{k}x+x_{k})}\rightarrow 1,\label{eq21}\\
c_{k}(x)&=& c(n)m_{k}^{2-p}R_{g}(\delta_{k}x+x_{k})\rightarrow 0.\label{eq22}
\end{eqnarray}
The above convergence is actually $C^{1}$ uniform convergence on any finite domain of $\mathbb{R}^{n}$. Noting that
$$0\leq v_{k}\leq v_{k}(0)=1.$$
By $L^{p}$ and Schauder estimate we obtain that, for any $R>0,$ there exists $C(R)>0$ and $k(R)>0$, such that
$$\|v_{k}\|_{C^{2,\alpha}(\overline{B}_{R})}\leq C(R),\ \ \ \ \forall \ k\geq k(R).$$
Picking $R_{m}\rightarrow +\infty$, we take a standard diagonal argument to know that there exists a subsequence $\{v_{m}\}$, such that $v_{m}\rightarrow v\in C^{2}(\mathbb{R}^{n})$ with respect to $C^{2}$-norm on every $\overline{B}_{R_{m}}$. Let $m\rightarrow \infty$, in view of \eqref{equa1}, \eqref{eq20}, \eqref{eq21} and \eqref{eq22} we know that $v$ is a nonnegative solution of the following equation
\begin{equation}\label{equa3}
\Delta v+Y(M)v^{p-1}=0,
\end{equation}
with $v(0)=1.$ By the maximal principle, we have $v>0$.

By changing of the variables, we obtain
\begin{eqnarray}
\int_{|x|\leq\frac{1}{2}\delta_{k}^{-1}}v_{k}^{p}b_{k}\ dx=\int_{B_{\frac{1}{2}}(x_{k})}u_{k}^{p}\sqrt{\det{g}}\ dx\leq\|u_{k}\|_{L^{p}}^{p}=1.
\end{eqnarray}

Since $\{v_{m}^{p}b_{m}\}$ converges to $v^{p}$ uniformly on any bounded domain in $\mathbb{R}^{n}$, by Fatou's lemma we obtain
\begin{equation}\label{ineq1}
\int_{\mathbb{R}^{n}}v^{p}dx\leq 1.
\end{equation}
Similarly, we have
\begin{equation}\label{s5}
\int_{|x|\leq\frac{1}{2}\delta_{k}^{-1}}|\nabla v_{k}|^{2}b_{k}\ dx=\int_{B_{\frac{1}{2}}(x_{k})}|\nabla u_{k}|^{2}\sqrt{\det{g}}\ dx\leq\|\nabla u_{k}\|_{L^{2}(B_{R_{0}}(O))}^{2}.
\end{equation}
By $L^{p}$ estimate, we have
\begin{equation}\nonumber
\| u_{k}\|_{W^{2,\frac{2n}{n+2}}(B_{R_{0}}(O))}\leq C.
\end{equation}
Then the Sobolev embedding theorem yields
\begin{equation}\nonumber
\|\nabla u_{k}\|_{L^{2}(B_{R_{0}}(O))}\leq C.
\end{equation}
Combining the above inequality and \eqref{s5}, by Fatou's lemma again we obtain
\begin{equation}
\int_{\mathbb{R}^{n}}|\nabla v|^{2}dx<\infty.
\end{equation}
Choose $\eta\in C_{0}^{\infty}(\mathbb{R}^{n})$ such that $0\leq \eta\leq1$, $\eta=1$ when $x\in B_{1}(0)$, $\eta=0$ when $x\in \mathbb{R}^{n}\backslash B_{2}(0)$. Define $$v_{R}(x)=\eta(\frac{x}{R})v(x).$$
Then, obviously we have
\begin{equation}\label{con1}
\int_{\mathbb{R}^{n}}(|\nabla (v-v_{R})|^{2}+|v-v_{R}|^{p})\ dx\rightarrow 0,\s\s \mbox{as} \s R\rightarrow \infty.
\end{equation}
Multiplying \eqref{equa3} by $v_{R}$ and integrating by parts, we get
\begin{equation}\label{con2}
\int_{\mathbb{R}^{n}}\nabla v\cdot \nabla v_{R}\ dx=Y(M)\int_{\mathbb{R}^{n}}v^{p-1}v_{R}\ dx.
\end{equation}
In view of \eqref{con1}, we let $R\rightarrow \infty$ in \eqref{con2} to obtain
\begin{equation}\label{con3}
\int_{\mathbb{R}^{n}}|\nabla v|^{2}\ dx=Y(M)\int_{\mathbb{R}^{n}}v^{p}\ dx.
\end{equation}
Now, by virtue of \eqref{ineq1}, \eqref{con3} and the Sobolev inequality we get
\begin{equation}\label{con4}
\Lambda\left(\int_{\mathbb{R}^{n}}v^{p}\ dx\right)^{\frac{2}{p}}\leq\int_{\mathbb{R}^{n}}|\nabla v|^{2}\ dx=Y(M)\int_{\mathbb{R}^{n}}v^{p}\ dx.
\end{equation}
So we have
\begin{equation}\label{con5}
\Lambda\leq Y(M)\left(\int_{\mathbb{R}^{n}}v^{p}\ dx\right)^{\frac{2}{n}}\leq Y(M).
\end{equation}
This contradicts the assumption $Y(M)<\Lambda.$
\medskip

\subsection{Step 4.}The convergence of $\{u_{j}\}$.

By the standard elliptic theory, $u_{j}$ is uniformly bounded in $C^{k,\alpha}$, $\forall k\in \mathbb{N}$. Hence, there exists a subsequence of $\{u_j\}$ which converges to $u$ satisfying
$$\Delta u-c(n)R_{g}u+Y(M)u^{p-1}=0.$$
However, we do not know whether or not $u\neq0$. The next theorem tells us that, under the hypothesis $Y(M)<Y_{\infty}(M)$, there holds $u\not\equiv0$.
\begin{pro}\label{concentrate}
Assume that $u$ is the limit function of $\{u_{j}\}$ as above. If $Y(M)<Y_{\infty}(M)$, then $u\not\equiv0$.
\end{pro}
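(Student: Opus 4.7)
The plan is to argue by contradiction: suppose $u \equiv 0$ and derive $Y_{\infty}(M) \leq Y(M)$, contradicting \eqref{condition}. Since $\{u_{j}\}$ is uniformly bounded in $C^{k,\alpha}$ on compact sets by Step 3, the convergence $u_{j} \to u$ is locally uniform, so $u \equiv 0$ forces $u_{j} \to 0$ uniformly on every compact subset of $M$. On the other hand $\|u_{j}\|_{L^{p}} = 1$ for every $j$, so the $L^{p}$-mass of $u_{j}$ must concentrate near infinity. The idea is to exploit this by inserting a cut-off of $u_{j}$ supported near infinity into the variational definition of $Y(M\backslash K_{i})$ and passing to the limit in $j$ and then in $i$.

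Concretely, I would fix the exhaustion $K_{i} = \overline{B_{i}(O)}$ and choose cut-offs $\eta_{i} \in C_{c}^{\infty}(M)$ with $0 \leq \eta_{i} \leq 1$, $\eta_{i} \equiv 0$ on $K_{i}$, $\eta_{i} \equiv 1$ on $M\backslash K_{i+1}$, and $|\nabla\eta_{i}|$ uniformly bounded. For each $j$ large enough that $K_{i+1} \subset B_{j}(O)$, testing $\Delta u_{j} - c(n) R_{g} u_{j} + Y_{j} u_{j}^{p-1} = 0$ against $\eta_{i}^{2} u_{j}$ and integrating by parts (boundary terms vanish since $u_{j}=0$ on $\partial B_{j}(O)$) gives the identity
$$
E_{g}(\eta_{i} u_{j}) \;=\; Y_{j} \int_{M} \eta_{i}^{2} u_{j}^{p}\,d\v_{g} \;+\; \int_{M} u_{j}^{2} |\nabla \eta_{i}|^{2}\,d\v_{g}.
$$
Both the gradient-cut-off error $\int u_{j}^{2} |\nabla\eta_{i}|^{2} \,d\v_{g}$ and the difference $\int (\eta_{i}^{2} - \eta_{i}^{p}) u_{j}^{p} \,d\v_{g}$ are supported in the fixed compact annulus $K_{i+1}\backslash K_{i}$, on which $u_{j} \to 0$ uniformly, so each is $o(1)$ as $j\to\infty$. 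Hence $E_{g}(\eta_{i} u_{j}) = Y_{j} \|\eta_{i} u_{j}\|_{L^{p}}^{p} + o(1)$.

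Since $\eta_{i} u_{j}$ lies in $W_{0}^{1,2}(M\backslash K_{i})$, the variational characterization of the Yamabe constant gives $Y(M\backslash K_{i})\,\|\eta_{i} u_{j}\|_{L^{p}}^{2} \leq E_{g}(\eta_{i} u_{j})$. Because $u_{j} \to 0$ on $K_{i+1}$ and $\|u_{j}\|_{L^{p}} = 1$, we have $\|\eta_{i} u_{j}\|_{L^{p}}^{p} \to 1$ as $j \to \infty$. Combining these with $Y_{j} \downarrow Y(M)$ and letting $j \to \infty$ yields $Y(M\backslash K_{i}) \leq Y(M)$ for every sufficiently large $i$; then letting $i \to \infty$ gives $Y_{\infty}(M) \leq Y(M)$, the desired contradiction.

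The main technical point I expect to handle carefully is making sure the identity $E_{g}(\eta_{i} u_{j}) = Y_{j}\|\eta_{i} u_{j}\|_{L^{p}}^{p} + o(1)$ holds uniformly with respect to the sign of $Y_{j}$, so that the positive and non-positive Yamabe-constant regimes can be treated in a single argument. This reduces to the observation that $\eta_{i}^{2} - \eta_{i}^{p} \geq 0$ is supported in the compact annulus $K_{i+1}\backslash K_{i}$, where $u_{j} \to 0$ uniformly, and consequently $\int (\eta_{i}^{2} - \eta_{i}^{p}) u_{j}^{p} \,d\v_{g} \to 0$ as $j \to \infty$ irrespective of the sign of $Y_{j}$. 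Once this bookkeeping is in place, the rest is a direct application of the definitions of $Y$ and $Y_{\infty}$ together with $Y_{j} \downarrow Y(M)$.
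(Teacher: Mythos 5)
Your proof is correct and follows essentially the same route as the paper: argue by contradiction, cut $u_{j}$ off near infinity, feed the cutoff into the variational characterization of $Y(M\backslash K_{i})$, and pass to the limit first in $j$ and then in $i$ to force $Y_{\infty}(M)\leq Y(M)$. The only difference is bookkeeping: you test the equation against $\eta_{i}^{2}u_{j}$ to obtain a clean localized energy identity, whereas the paper decomposes $E_{g}(u_{j})=Y_{j}$ into interior, annulus and exterior pieces; your variant has the minor advantage of requiring only uniform $C^{0}$ (rather than $C^{1}$) convergence of $u_{j}$ to $0$ on compact sets.
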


\begin{proof}
We prove this lemma by contradiction. If $u\equiv0$, then we know $u_{j}$ converge to $0$ on any compact set of $M$. For any fixed $R$, let $\eta(r)$ be a smooth function such that $\eta(r)=1$ when $r\geq2R$; $\eta(r)=0$ when $r\leq \frac{3}{2}R$. Then we have
\begin{eqnarray}\label{eq15}
& &\int_{M}(|\nabla u_{j}|^{2}+c(n)R_{g}u_{j}^{2})\ d\v_{g}\nonumber\\&=&\int_{M\backslash B_{R}}(|\nabla u_{j}|^{2}+c(n)R_{g}u_{j}^{2})\ d\v_{g}+\int_{ B_{R}}(|\nabla u_{j}|^{2}+c(n)R_{g}u_{j}^{2})\ d\v_{g}\nonumber\\
&=&\int_{M\backslash B_{R}}(|\nabla \eta u_{j}|^{2}+c(n)R_{g}\eta^{2}u_{j}^{2})\ d\v_{g}+\int_{B_{2R}\backslash B_{R}}(|\nabla u_{j}|^{2}+c(n)R_{g}u_{j}^{2})\ d\v_{g}\nonumber\\
& &-\int_{B_{2R}\backslash B_{R}}(|\nabla \eta u_{j}|^{2}+c(n)R_{g}\eta^{2}u_{j}^{2})\ d\v_{g}+\int_{ B_{R}}(|\nabla u_{j}|^{2}+c(n)R_{g}u_{j}^{2})\ d\v_{g}\nonumber\\
&\geq& Y\left(M\backslash B_{R}\right)\left(\int_{M\backslash B_{R}}|\eta u_{j}|^{p}\ d\v_{g}\right)^{\frac{2}{p}}+\int_{B_{2R}\backslash B_{R}}(|\nabla u_{j}|^{2}+c(n)R_{g}u_{j}^{2})d\v_{g}\nonumber\\
& &-\int_{B_{2R}\backslash B_{R}}(|\nabla \eta u_{j}|^{2}+c(n)R_{g}\eta^{2}u_{j}^{2})\ d\v_{g}+\int_{ B_{R}}(|\nabla u_{j}|^{2}+c(n)R_{g}u_{j}^{2})d\v_{g}\label{eq15},
\end{eqnarray}
and
\begin{equation}\label{eq16}
\int_{M\backslash B_{R}}|\eta u_{j}|^{p}\ d\v_{g}=1-\int_{B_{2R}}u_{j}^{p}\ d\v_{g}+\int_{B_{2R}\backslash B_{R}}(\eta u_{j})^{p}\ d\v_{g}.
\end{equation}
Substitute \eqref{eq16} to \eqref{eq15}, then let $j\rightarrow\infty$ to get
\begin{equation}
Y(M)\geq Y\left(M\backslash B_{R}\right)\nonumber.
\end{equation}
Since the above inequality holds for any fixed $R$, let $R\rightarrow\infty$, we obtain
\begin{equation}
Y(M)\geq Y_{\infty}(M).
\end{equation}
So we have
$$Y(M)= Y_{\infty}(M)$$
which contradicts to the hypothesis.
\end{proof}

By Proposition \ref{concentrate} we know $u\not\equiv0$. Using maximal principle, we obtain $u(x)>0$.  Now after a suitable dilation, we can obtain a positive solution to \eqref{Yamabe equation} with $K=1,0,-1$ when $Y(M)$ is positive, $0$ and negative respectively.
This completes the proof.
\\medskip

\section{ Proof of Theorem \ref{mainthm2} and Theorem \ref{mainthm3}}\label{proof2}
In this section, we will study the blowup behavior of $\{u_{i}\}$ under the pointed Cheeger-Gromov topology. First of all, we prove a uniform estimate of $u_{j}$ near the boundary. The method used here is the Giorgi-Nash-Moser iteration just as in the argument in the step 2 of the above section.\medskip

Let $u_{j}$ be the positive solution obtained in Proposition \ref{solution} and $$U_{j}= \{x\in B_{j}(O)| d\left(x,\partial B_{j}(O)\right)<\frac{1}{8}\}.$$
In order to prove Theorem \ref{mainthm2} and \ref{mainthm3}, we need to establish the following theorem.
\begin{thm}\label{boundary}
There exists a positive constant $C$ which does not depend on $j$ such that $u_{j}(x)\leq C$ for all $x\in U_{j}$ when $j$ is large enough.
\end{thm}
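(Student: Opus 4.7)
The plan is to carry out the Moser iteration of Lemma \ref{lem3.1}, this time at unit scale (without rescaling the metric), on a ball of some fixed small radius $r_0$ around an arbitrary $x_0\in U_j$. The key observation is that $d(x_0,O)\geq j-\tfrac{1}{8}\to\infty$ as $j\to\infty$, so for $j$ large, $B_{r_0}(x_0)\subset M\setminus B_R(O)$ for any prescribed $R$. This lets us replace the global Sobolev constant (which only gives $1/Y(M)$) by the much better $1/Y(M\setminus B_R(O))\to 1/Y_\infty(M)$.

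First I would fix $R$ large enough that $Y(M\setminus B_R(O))>Y(M)$, which is possible since $Y_\infty(M)>Y(M)$. Setting $C_0=1/Y(M\setminus B_R(O))$, the monotonicity $Y_j\downarrow Y(M)$ gives $C_0Y_j<1$ for all $j$ large. Choosing a radial cutoff $\eta$ supported in $B_{r_0}(x_0)\subset M\setminus B_R(O)$, putting $w=u_j^{1+b}$, and testing the subsolution inequality $\Delta u_j-c(n)R_gu_j+Y_ju_j^{p-1}\geq 0$ against $\eta^2 u_j^{1+2b}$ exactly as in the derivation of (3.3), then combining with the Yamabe--Sobolev inequality on $M\setminus B_R(O)$, yields
\[
\|w\eta\|_{L^p}^2\leq CC_0\beta^2\|\nabla\eta\|_\infty^2\int w^2\,d\v_g+C_0Y_j(\beta^2+\epsilon)\int u_j^{p-2}(w\eta)^2\,d\v_g,
\]
with $\beta=1+b$.

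To close this off, I would use the global bound $\|u_j\|_{L^p(M)}\leq 1$ together with H\"older to control $\int u_j^{p-2}(w\eta)^2\leq\|w\eta\|_{L^p}^2$. Choosing $\beta_0\in(1,n/(n-2))$ with $C_0Y_j\beta_0^2<1$ (possible thanks to the strict inequality $Y_\infty>Y(M)$, and trivially satisfied when $Y_j\leq 0$), the critical term can be absorbed to give
\[
\|u_j^{\beta_0}\eta\|_{L^p}^2\leq C\|\nabla\eta\|_\infty^2\|u_j^{\beta_0}\|_{L^2}^2.
\]
Combined with $\|u_j\|_{L^p(M)}\leq 1$ and the uniform upper volume bound on $B_{r_0}(x_0)$ (furnished by the bounded Ricci hypothesis in Theorem \ref{mainthm2}, and by Anderson's local regularity in the setting of Theorem \ref{mainthm3}), this produces a uniform preliminary estimate $\|u_j\|_{L^{p\beta_0}(B_{r_0/2}(x_0))}\leq C$.

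With the preliminary $L^{p\beta_0}$ bound in hand, the standard Moser iteration goes through exactly as in (3.8)--(3.11): the critical term $\int u_j^{p-2}(w\eta)^2$ is now estimated by H\"older with exponents $(n\beta_0/2,\,n\beta_0/(n\beta_0-2))$, producing a subcritical reverse-H\"older inequality that can be iterated unconditionally (taking $\beta=\delta^{-m}$ with $\delta=(n-2)\beta_0/(n\beta_0-2)<1$) to yield $\|u_j\|_{L^\infty(B_{r_0/4}(x_0))}\leq C$. Since $x_0\in U_j$ was arbitrary, this proves the theorem. The main obstacle is the absorption step, which hinges on the strict gap $Y_\infty(M)>Y(M)$ to guarantee $C_0Y_j<1$; without this gap the critical nonlinearity cannot be tamed by the Sobolev term, and the Moser machinery breaks down. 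The secondary technical point is that the uniform upper bound on $\v_g(B_{r_0}(x_0))$ must be extracted from the two different hypothesis sets (Theorems \ref{mainthm2} and \ref{mainthm3}) separately.
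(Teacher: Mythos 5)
Your proposal is correct and follows essentially the same route as the paper's proof: a unit-scale Moser iteration on small balls near $\partial B_j(O)$, using the zero-extended subsolution, the improved Sobolev constant $1/Y(M\setminus B_R(O))$ available far from $O$, the normalization $\|u_j\|_{L^p}=1$ to absorb the critical term via the gap $Y(M)<Y_\infty(M)$, and a uniform volume bound to start the subcritical iteration. The only cosmetic differences are that the paper takes the $j$-dependent constant $C_j=1/Y(M\setminus B_{j/2}(O))\to 1/Y_\infty(M)$ and centers the balls at points of $\partial B_j(O)$, whereas you fix $R$ once and center at an arbitrary $x_0\in U_j$.
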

\proof
Extend $u_{j}$ to the whole manifold by defining $u_{j}(x)=0$ when $x\notin B_{j}(O)$. The extended function, still denoted it by $u_{j}$, is continuous and satisfies
\begin{equation}\label{eq17}
\Delta u_{j}-c(n)R_{g}u_{j}+Y_{j}u_{j}^{p-1}\geq 0\s\s \mbox{on} \s M.
\end{equation}
Let $G(s)=s^{\beta}$ and define
$$ F(t)=\int_{0}^{t}G'(s)^{2}ds=\frac{\beta^{2}}{2\beta-1}t^{2\beta-1}.$$
By a simple computation, we have that, as $\beta>1$, there holds true
\begin{equation}
sF(s)\leq s^{2}G'(s)^{2}=\beta^{2}G(s)^{2} .
\end{equation}

Take $\phi\in C^{\infty}[0,\infty)$ such that $0\leq\phi\leq1$; $\phi(r)=1$, when $r\in [0,1/2]$; $\phi(r)=0$, when $r\in [1,\infty)$; and $|\phi'(r)|\leq C$. For any fixed $x_{j}\in \partial B_{j}(O)$, let $\eta(x)=\phi(d(x_{j},x))$. Obviously, $\mbox{spt}(\eta^{2}F(v))\subseteq M\setminus B_{\frac{j}{2}}(O)$. Multiplying the both side of \eqref{eq17} by $\eta^{2}F(v)$ and integrating by parts yields that, for some $\epsilon>0$,
\begin{align*}
 &\|\nabla(G(u_{j})\eta)\|_{L^{2}}^{2}+\int c(n)R_{g}(G(u_{j})\eta)^{2}d\v_{g}\\
\leq & C\beta^{2}\|\nabla\eta\|_{L^{\infty}}^{2}\int G(u_{j})^{2}d\v_{g}+(\beta^{2}+\epsilon)Y_{j}\int u_{j}^{p-2}G(u_{j})^{2}\eta^{2}d\v_{g},
\end{align*}
where $\epsilon $ can be chosen arbitrary small.

Since $Y\left(M\backslash B_{r}(O)\right)$ is increasing with respect to $r$, we infer from the assumption $Y_{\infty}(M)>0$ that $Y\left(M\backslash B_{r}(O)\right)>0$ when $r$ is large. Let
$$C_{j}=\frac{1}{Y\left(M\backslash B_{\frac{j}{2}}(O)\right)}.$$
Noting $\mbox{spt}(\eta^{2}F(u_{j}))\subseteq M\setminus B_{\frac{j}{2}}(O)$, by the definition of the Yamabe quotient we have
$$\|G(u_{j})\eta\|_{L^{p}}^{2}\leq C_{j}\|\nabla(G(u_{j})\eta)\|_{L^{2}}^{2}+C_{j}\int_{B_{1}(x_{j})}c(n)R_{g}[G(u_{j})\eta]^{2}d\v_{g}.$$
By a similar argument with that in the step 2 of Section 3, we derive from the above inequality
\begin{eqnarray}
\|G(u_{j})\eta\|_{L^{p}}^{2}
&\leq& CC_{j}\beta^{2}\|\nabla\eta\|_{L^{\infty}}^{2}\int G(u_{j})^{2}d\v_{g}+C_{j}Y_{j}(\beta^{2}+\epsilon)\int u_{j}^{p-2}G(u_{j})^{2}\eta^{2}d\v_{g}\\
&\leq& CC_{j}\beta^{2}\|\nabla\eta\|_{L^{\infty}}^{2}\int G(u_{j})^{2}d\v_{g}+C_{j}Y_{j}(\beta^{2}+\epsilon)\|G(u_{j})\eta\|_{L^{p}}^{2}\label{eq18}.
\end{eqnarray}
By the hypothesis, we have
$$\lim_{j\rightarrow +\infty}C_{j}Y_{j}=\frac{Y(M)}{Y_{\infty}(M)}<1.$$
Hence, we can choose $\beta_{0}$ sufficiently close to $1$, $j$ sufficiently large and $\epsilon$ sufficiently small such that
\begin{equation}\label{eq19}
(\beta_{0}^{2}+\epsilon)C_{j}Y_{j}\leq \lambda<1.
\end{equation}
Substituting \eqref{eq19} into \eqref{eq18} leads to
\begin{equation}
\|u_{j}^{\beta_{0}}\eta\|_{L^{p}}^{2}\leq C\int|\nabla\eta|^{2}u_{j}^{2\beta_{0}}d\v_{g}.
\end{equation}
By the definition of $\eta$ and H\"{o}lder inequality
\begin{eqnarray}
\left(\int_{B_{\frac{1}{2}}(x_{j})}u_{j}^{\frac{2n\beta_{0}}{n-2}}\ dvol_{g}\right)^{\frac{n-2}{n}}&\leq&\|u_{j}^{\beta_{0}}\eta\|_{L^{p}}^{2}\leq C\int_{B_{1}(x_{j})}u_{j}^{2\beta_{0}}\ d\v_{g}\nonumber\\
&\leq& C(\v_{g}(B_{1}(x_{j}))^{\frac{n-(n-2)\beta_{0}}{n}}\nonumber\leq C.
\end{eqnarray}
Here we have used the volume comparison theorem in the above inequality. Then, by almost the same iteration argument as in the previous section we can show that
$$\|u_{j}\|_{L^{\infty}(B_{\frac{r_{1}}{2}}(x_{j}))}\leq C,$$
where $r_{1}<\frac{1}{2}$ is any fixed positive number. Thus we complete the proof.
\endproof
\medskip

\noindent {\bf Proof of Theorem \ref{mainthm2}.} It is sufficient to show that $\{u_{j}\}$ is uniformly bounded on any given compact set on $M$. If not, then the following two situations appear.
\medskip

\noindent {\bf Case 1.} $\{u_{j}\}$ blow up at `interior' of $M$, i.e. there exists a subsequence $\{k\}\subseteq\{j\}$, $z_{k}\in M$, such that
$$u_{k}(z_{k})=\max u_{k}\triangleq m_{k}\rightarrow +\infty,$$
where $z_{k}\in K$ and $K\subseteq M$ is a compact subset. By the same arguments as in the step 3 of the previous section, we know this is impossible.

\medskip

\noindent {\bf Case 2.} $\{u_{j}\}$ blow up at `infinity' of $M$, i.e there exists a subsequence $\{k\}\subseteq\{j\}$, $z_{k}\in M$, such that
$$u_{k}(z_{k})=\max u_{k}\triangleq m_{k}\rightarrow +\infty,$$
where $z_{k}\rightarrow \infty$. If this case occurs, we can not choose a normal coordinate system at ``infinity'' just as in Case $1$. To overcome this difficulty, we consider the sequence of pointed manifold $(M, z_{i},g)$.  By Theorem $1.1$ and  Remark $2.4$ in \cite{Anderson2}, we know there exists a subsequence denoted by $\{z_{j}\}$ such that $\{(M, z_{i},g)\}$ converges in the $C^{1,\alpha}$ topology to a complete pointed Riemann manifold $(M_{\infty},z_{\infty}, g_{\infty})$ under the assumption $|Ric|\leq c$ and $\mbox{inj}(M)\geq a>0$.
\medskip

Take a normal coordinate system $\{x_{i}\}$ around $z_{\infty}$ on $M_{\infty}$. Without loss of generality, we can assume this coordinate chart is defined on $B_{\frac{1}{16}}(z_{\infty})$. By the definition \ref{cheeger}, we know there exist $F_{i}$ such that $F_{i}(z_{\infty})=z_{i}$, $F_{i}({B_{\frac{1}{16}}(z_{\infty})})\subset B_{\frac{1}{8}}(z_{i})$ when $i$ is sufficiently large, and $F_{i}^{*}g\rightarrow g_{\infty}$ on $B_{\frac{1}{16}}(z_{\infty})$ in the $C^{1,\alpha}$ topology. Moreover, by Theorem \ref{boundary}, we have $B_{\frac{1}{8}}(z_{i}) \subset B_{i}(O)$ when $i$ is large enough. Denote
\begin{equation}
F_{i}^{*}g=g_{i}\s\s \mbox{and}\s\s v_{j}=u_{j}\comp F_{j}.
\end{equation}
Then $v_{j}$ satisfies the following equation on $B_{\frac{1}{16}}(z_{\infty})$
\begin{equation}
\Delta_{g_{j}}v_{j}-c(n)(R_{g}\comp F_{j})v_{j}+Y_{j}v_{j}^{p-1}=0.
\end{equation}
Let $\widetilde{v}_{k}=m_{k}^{-1}v_{k}(\delta_{k}x)$, where $m_{k}$ and $\delta_{k}$ is the same as that in Case $1$. Obviously, the definition domain of $\widetilde {v}_{j}$ is the ball centered at $0$ with radius $\rho_{k}=\frac{1}{16\delta_{k}}\rightarrow \infty$ in $\mathbb{R}^{n}$. Moreover $\widetilde{v}_{k}$ satisfies the following equation
\begin{equation}\label{equa2}
\frac{1}{\tilde{b}_{k}}\partial_{i}(\tilde{b}_{k}\tilde{a}^{ij}_{k}\partial_{j}\widetilde{v}_{k})-\tilde{c}_{k}\widetilde{v}_{k}+Y_{k}\widetilde{v}_{k}^{p-1}=0,
\end{equation}
where
\begin{eqnarray}
\tilde{a}_{k}^{ij}(x)&=&g^{ij}_{k}(\delta_{k}x)\rightarrow \delta_{ij},\label{eq23}\\
\tilde{b}_{k}(x)&=&\sqrt{\det g_{k}(\delta_{k}x)}\rightarrow 1,\label{eq24}\\
\tilde{c}_{k}(x)&=& c(n)m_{k}^{2-p}(R_{g}\comp F_{k})(\delta_{k}x)\rightarrow 0\label{eq25}.
\end{eqnarray}
Here,
\begin{equation}
0\leq \widetilde{v}_{k}\leq \widetilde{v}_{k}(0) = 1.
\end{equation}
By $L^{p}$ estimate we obtain that, for any $R>0$ and $q>0$, there exists $C(R)>0$ and $k(R)>0$ such that
$$\|\widetilde{v}_{k}\|_{W^{2,q}(B_{R})}\leq C(R),\ \ \ \ \forall \ k\geq k(R).$$
The Sobolev embedding theorem yields $\widetilde{v}_{k}\in C^{1,\alpha}(B_{R}(0))$. Hence, by taking a subsequence we get
$$\widetilde{v}_{k}\rightarrow v\s\s\mbox{in} \s\s C^{1,\alpha}.$$
It is easy to see that $v$ is a $C^{1,\alpha}$ weak solution of the following equation
$$\Delta v+Y(M)v^{p-1}=0\s\s \mbox{in}\s \s B_{R}(0).$$
Since $v\leq1$, the standard elliptic theory tells us that $v$ is smooth.
\medskip

Choosing $R_{m}\rightarrow +\infty$ and taking a standard diagonal argument we know that there exists a subsequence $\{v_{m}\}$ such that $v_{m}\rightarrow v$ with respect to the $C^{1,\alpha}$ norm on every compact subset in $\R^{n}$. Letting $m\rightarrow \infty$, in view of \eqref{equa2}, \eqref{eq23}, \eqref{eq24} and \eqref{eq25} we know that $v$ is a nonnegative solution with $v(0)=1$ of the following equation
\begin{equation}
\Delta v+Y(M)v^{p-1}=0.
\end{equation}
The maximal principle yields $v>0$.

Similarly, we have
$$\int_{\mathbb{R}^{n}} v^{p} dx\leq 1\s\s\s \mbox{and} \s\s\s \int_{\mathbb{R}^{n}}|\nabla v|^{2}dx<\infty.$$
For the present situation, it is easy to see that $v$ also satisfies \eqref{con1}, \eqref{con2}, \eqref{con3}, \eqref{con4} and \eqref{con5}. Thus we can get the same contradiction.
\medskip

From now on we know $\{u_{j}\}$ is uniformly bounded on $M$. By the standard elliptic theory, $u_{j}$ is uniformly bounded in $C^{2,\alpha}$ on any compact set $K\subseteq M$. Hence, there exists a subsequence which converges to $u$ satisfying
$$\Delta u-c(n)R_{g}u+Y(M)u^{p-1}=0.$$
By Proposition \ref{concentrate} again, we know $u$ is a positive solution. Then, by a suitable scaling we can obtain a positive solution to \eqref{Yamabe equation} with $K=1,0,-1$ when $Y(M)$ is positive, $0$ and negative respectively. Thus we complete the proof.
\endproof\medskip

\noindent {\bf Proof of Theorem \ref{mainthm3}.} By Theorem $2.6$ in \cite{Anderson2}, we know pointed manifolds $(M_{i}, z_{i}, g_{i})$ satisfying the condition $(i)$ in Theorem \ref{mainthm3} will converge in the pointed Gromov-Hausdorff topology, to an $n$ dimensional orbifold $(V,g)$ with finite number of singular points, each having a neighborhood homeomorphic to the cone $C(S^{n-1}/\Gamma)$ with $\Gamma$ a finite subgroup of $O(n)$. Furthermore, this convergence is $C^{1,\alpha}$ off the singular points. However, if these manifolds satisfy the additional condition $(ii)$ in Theorem \ref{mainthm3}, then the singularities of this orbifold do not arise, see Theorem $A'$ in \cite{Anderson1}, Remark $2.7$ and Corollary $2.8$ in \cite{Anderson2}. All in all, we have $(M, z_{i},g)$ converge in the $C^{1,\alpha}$ topology to a complete pointed Riemann manifold $(M_{\infty},z_{\infty}, g_{\infty})$. The proof of this Theorem is exactly the same as the proof of Theorem \ref{mainthm2}.\endproof

\bigskip

\noindent {\bf Acknowledgment}:
The author would like to thank his supervisors Professor Youde Wang and Professor Yuxiang Li for their encouragement and inspiring advices.

{}

\vspace{1.0cm}

Guodong Wei

{\small\it Academy of Mathematics and Systems Sciences, Chinese Academy of Sciences, Beijing 100080,  P.R. China.}

{\small\it Email: weiguodong@amss.ac.cn}


\begin{thebibliography}{2}

\bibitem{Anderson1} M. T. Anderson; Ricci curvature bounds and {E}instein metrics on compact manifolds, {J. Amer. Math. Soc.} {\bf 2} (1989), 455--490.

\bibitem{Anderson2} M. T. Anderson; Convergence and rigidity of manifolds under {R}icci curvature bounds, {Invent. Math.} {\bf 102} (1990), 429--445.

\bibitem{Aubin1} T. Aubin; \'{E}quations diff\'erentielles non lin\'eaires et probl\`eme de {Y}amabe concernant la courbure scalaire, {J. Math. Pures Appl. (9)} {\bf 55} (1976), 269--296.

\bibitem{Aubin2} T. Aubin; A course in differential geometry, {Graduate Studies in Mathematics} {\bf 27} {American Mathematical Society, Providence, RI}, (2001), xii+184.

\bibitem{Aviles-McOwen}P. Aviles, R. C. McOwen; Conformal deformation to constant negative scalar curvature on noncompact {R}iemannian manifolds, {J. Differential Geom.} {\bf 27} (1988), 225--239.

\bibitem{Ding-Wang}W. Ding, Y. Wang; Harmonic maps of complete noncompact Riemannian manifolds , {Int. J. Math. } {\bf 2} (1991), no. 6, 617--633.

\bibitem{Gilbarg-Trudinger} D. Gilbarg, N. S. Trudinger; Elliptic partial differential equations of second order, {Classics in Mathematics} {Reprint of the 1998 edition}, {Springer-Verlag, Berlin}, (2001), xiv+517.

\bibitem{Groe-Nardmann} N. Gro{\ss}e, M. Nardmann;  The Yamabe constant on noncompact manifolds, {J. Geom.  Anal. } {\bf 24} (2014), 1092--1125.

\bibitem{Jin} Z. R. Jin; A counterexample to the Yamabe problem for complete noncompact manifolds. Partial differential equations (Tianjin, 1986), {Lecture Notes in Math}.{\bf 1306}, Springer, Berlin, (1988), 93--101

\bibitem{Kazdan} J. L. Kazdan; Prescribing the curvature of a {R}iemannian manifold, {{CBMS Regional Conference Series in Mathematics},} {\bf 57}, {Published for the Conference Board of the Mathematical Sciences, Washington, DC; by the American Mathematical Society, Providence, RI}, (1985), {vii+55}.

\bibitem{Kim1} S. Kim; The {Y}amabe problem and applications on noncompact complete {R}iemannian manifolds, {Geom. Dedicata} {\bf 64} (1997), 373--381.

\bibitem{Kim2} S. Kim; Scalar curvature on noncompact complete {R}iemannian manifolds, \emph{Nonlinear Anal.} {\bf 26} (1996), 1985--1993.

\bibitem{Li-Bernhard} Y. Li, R. Bernhard;  A sharp Trudinger-Moser type inequality for unbounded domains in $R^{n}$. {Indiana Univ. Math. J. } {\bf 57} (2008), no. 1, 669--699.

\bibitem{Lee-John} J. M. Lee, T. H. Parker; The {Y}amabe problem, {Bull. Amer. Math. Soc. (N.S.)} {\bf 17} (1987), 37--91.

\bibitem{Petersen} P. Petersen; Riemannian geometry, Second Edition, {Graduate Texts in Mathematics} {\bf 171} {Springer, New York}, (2006), xvi+401.

\bibitem{Schoen} R. Schoen; Conformal deformation of a {R}iemannian metric to constant scalar curvature, {J. Differential Geom.} {\bf 20} (1984), 479--495.

\bibitem{Schoen-Yau} R. Schoen, S. T. Yau; Conformally flat manifolds, {K}leinian groups and scalar curvature, {Invent. Math.} {\bf 92} (1988), 47--71.

\bibitem{Trudinger} N. S. Trudinger; Remarks concerning the conformal deformation of {R}iemannian structures on compact manifolds, {Ann. Scuola Norm. Sup. Pisa (3)} {\bf 22} (1968), 265--274.

\bibitem{Yamabe} H. Yamabe; On a deformation of {R}iemannian structures on compact manifolds, \emph{Osaka Math. J.} {\bf 12} (1960), 21--37.

\bibitem{Yau} S. T. Yau; Problem section, \emph{Seminar on {D}ifferential {G}eometry}, {Ann. of Math. Stud.} {\bf 102}  {Princeton Univ. Press, Princeton, N.J.} (1982), 669--706.

\bibitem{Zhang} Q. S. Zhang; Finite energy solutions to the {Y}amabe equation, {Geom. Dedicata} {\bf 101} (2003), 153--165.


\end{thebibliography}
\end{document}